\documentclass[12pt, a4paper]{amsart}
\usepackage{amsmath,amssymb,amsfonts, float}
\usepackage{thmtools}

\usepackage[all]{xy}
\usepackage{caption}
\usepackage{enumerate}
\usepackage{mathpazo}
\usepackage{a4wide}
\usepackage{diagbox}

\usepackage{bm}
\usepackage{graphicx}
\usepackage[breaklinks=true]{hyperref}

\usepackage{xcolor}
\usepackage{multicol}
\usepackage{tabularx}

\usepackage{hyperref}
\usepackage[capitalize]{cleveref}

\usepackage[normalem]{ulem}

\newtheorem{thm}{Theorem}[section] 
\newtheorem*{thm*}{Theorem} 
\newtheorem{prop}[thm]{Proposition}

\newtheorem{cor}[thm]{Corollary}

\theoremstyle{definition}
\newtheorem{definition}[thm]{Definition}

\newtheorem{question}[thm]{Question}
\newtheorem{rem}[thm]{Remark}

\DeclareMathOperator{\C}{\mathbb{C}}
\DeclareMathOperator{\Z}{\mathbb{Z}}

\DeclareMathOperator{\F}{\mathbb{F}}

\DeclareMathOperator{\Hom}{{\rm Hom}}

\DeclareMathOperator{\GL}{\text{GL}}

\DeclareMathOperator{\rank}{\text{rank}}

    \DeclareFontFamily{U}{wncy}{}
    \DeclareFontShape{U}{wncy}{m}{n}{<->wncyr10}{}
    \DeclareSymbolFont{mcy}{U}{wncy}{m}{n}
    \DeclareMathSymbol{\Sha}{\mathord}{mcy}{"58}

\numberwithin{equation}{section}

\DeclareSymbolFont{bbold}{U}{bbold}{m}{n}
\DeclareSymbolFontAlphabet{\mathbbold}{bbold}

\usepackage{hyperref}

\newcommand{\Rad}{{\rm Rad}}
\newcommand{\s}{\text{ss}}

\begin{document}
\title{On $U$-unitary Cayley graphs \\ over finite rings}
 \author{Tung T. Nguyen, Nguy$\tilde{\text{\^{e}}}$n Duy T\^{a}n }

 \address{Department of Mathematics, Elmhurst University, Illinois, USA}
 \email{tung.nguyen@elmhurst.edu}
 
  \address{
Faculty of Mathematics and Informatics, Hanoi University of Science and Technology, 1 Dai Co Viet Road, Hanoi, Vietnam } 
\email{tan.nguyenduy@hust.edu.vn}

\thanks{TTN is partially supported by an AMS-Simons Travel Grant. Parts of this work were completed during his trip to the JMM 2026 in Washington, DC. He thanks Elmhurst University for their support to attend this meeting through a travel grant.  NDT is partially supported by the Vietnam National
Foundation for Science and Technology Development (NAFOSTED) under grant number 101.04-2023.21}
\keywords{Gcd graphs, Finite rings, Supercharacter theory, Perfect State Transfer.}
\subjclass[2020]{Primary 05C25, 11L05, 13A70, 13M05}
\begin{abstract}
Graphs defined over a finite ring are well-studied in the literature. Due to their nature, these types of graphs connect several branches of mathematics, including algebra, number theory, matrix theory, and representation theory. In recent work, we studied $U$-unitary Cayley graphs over a finite commutative ring, which unifies several constructions of graphs with arithmetic origins. Among various structural graph-theoretic results on these graphs—such as their connectedness, primeness, and bipartiteness—we show that their spectra can be described via a certain supercharacter theory. Utilizing this spectral description, we are able to find some classes of gcd-graphs that possess perfect state transfer. In this article, we generalize this study to finite non-commutative rings, with a special focus on the case of the matrix rings with coefficients in a finite field. We show, in particular, that gcd-graphs over these matrix rings have no perfect state transfer.
\end{abstract}
\maketitle

\section{Introduction}

Let $n$ be a positive integer and $D$ a subset of proper divisors of $n$. The gcd-graph $G_n(D)$ is defined as follows: 

\begin{enumerate} 
\item The vertices of $G_n(D)$ are the elements of the finite ring $\Z/n$. 

\item Two vertices $a,b$ are adjacent if $\gcd(a-b, n) \in D.$  

\end{enumerate}
This type of graph was first introduced by Klotz and Sander in \cite{klotz2007some}. There, the authors describe several fundamental graph-theoretic properties of these graphs. In particular, they explain a beautiful connection between the spectra of these graphs and Ramanujan sums. As a consequence of this spectral description, Klotz and Sander show that all eigenvalues of gcd-graphs are integers. In \cite{so2006integral}, So proves the converse of this statement; namely, if a Cayley graph over $\mathbb{Z}/n$ has all integral eigenvalues, then it must be a gcd-graph. The works of Klotz, Sander, and So have led to a series of studies on Perfect State Transfer (PST) on graphs—a concept introduced by physicists in quantum spin networks (see \cite{bavsic2009perfect, BP2, saxena2007parameters, pst-gcd-new}).

In general, a gcd-graph can be defined over a finite commutative ring $R$ using the interplay between the additive and multiplicative structures of $R$ (see \cite{pst-gcd-new-2, minavc2024gcd, nguyengcd2026, pst-gcd-new}). We also refer to \cite{anderson2021graphs} and the references therein for a state-of-the-art overview of the study of graphs associated with rings. For gcd-graphs, in particular, many aspects of them have been investigated, including but not limited to their connectedness, primeness, and clique and independence numbers (see \cite{unitary, minavc2024gcd, nguyengcd2026, pst-gcd-new}). Furthermore, when the underlying ring $R$ is a finite commutative Frobenius ring, we show in previous work that the spectra of gcd-graphs over $R$ can be described by various arithmetical sums such as Gauss sums, Ramanujan sums, and Heilbronn sums (see \cite{nguyen2025supercharacters, nguyengcd2026}).

The goal of this article is to define and study the concept of gcd-graphs over arbitrary finite rings (and, more generally, $U$-unitary Cayley graphs over these rings). While some former results over commutative rings generalize straightforwardly, others require more careful consideration, as left and right multiplication might not be the same, which leads to several new phenomena, such as two-sided ideal structures. In particular, this necessitates a revisit of the notion of a gcd-graph in the non-commutative setting.

\subsection{Outline.} 
In \cref{sec:foundation}, we introduce the notion of a $U$-unitary Cayley graph over a finite ring $R$. We then describe some foundational graph-theoretic properties of these graphs, including their connectedness and primeness. In particular, we provide a complete answer regarding the connectedness and primeness of the unitary Cayley graph associated with $R$ under some mild conditions. \cref{sec:spectra} focuses on the spectra of these $U$-unitary Cayley graphs when the underlying $R$ is a symmetric Frobenius ring. We show that these spectra can be described via a certain supercharacter theory on $R$. As a consequence of this spectral description, we provide an upper bound for the number of distinct eigenvalues in a $U$-unitary Cayley graph. For certain graphs over a matrix ring $M_n(F)$, we also calculate this upper bound explicitly. In \cref{sec:spectra}, we also study the notion of relative Frobenius rings, which might be of independent interest. Finally, we utilize this spectral description to study the existence of Perfect State Transfer (PST) on $U$-unitary Cayley graphs. We show, in particular, that PST cannot exist on any gcd-graph over $M_n(F)$.

\section{$U$-unitary Cayley graphs and their graph theoretic properties} 
\label{sec:foundation}
\subsection{$U$-unitary Cayley graphs}
Let $R$ be a finite unital ring and $S$ a subset of $(R,+)$ such that $S = -S$ and $S$ does not contain $0$. The Cayley graph $\Gamma(R,S)$ is the undirected simple graph whose vertex set is $R$, and two vertices $a$ and $b$ are adjacent if and only if $a - b \in S$. In practice, $S$ is often referred to as the generating set of $\Gamma(R,S)$. In many applications, $S$ often has an arithmetic origin (see \cite{unitary, huang2022quadratic, jones2020isomorphisms, keshavarzi2025involutory, minac2023paley, podesta2019spectral, podesta2021_finitefield, suntornpoch2016cayley} for some works in this line of research).

When $R$ is commutative, we define in \cite{nguyen2025supercharacters} the notion of a $U$-unitary Cayley graph, where $U$ is a subgroup of $R^{\times}$—the unit group of $R$. More precisely, a Cayley graph of the form $\Gamma(R,S)$ is called a $U$-unitary Cayley graph if $S$ is stable under the action of $U$; namely, $US = S$. As we explain in \cite{nguyen2025supercharacters}, when $U = R^{\times}$ and $R = \mathbb{Z}/n$, this definition coincides with the classical definition of a gcd-graph described in the introduction. We remark that, by its definition, $U$ acts as an automorphism of each $U$-unitary Cayley graph $\Gamma(R,S)$ (see \cite[Proposition 3.7]{nguyen2025supercharacters}).

When $R$ is not commutative, we must take into account the fact that left and right multiplication might not be the same. Consequently, it seems reasonable to consider them simultaneously. For this reason, we introduce the following definition.

\begin{definition}
 Let $R, S$ be as before. Let $U$ be a subgroup of $R^{\times}$ such that $-1 \in U$.
 We say that $\Gamma(R,S)$ is a $U$-unitary Cayley graph if $S$ is stable under the left and right action of $U$; namely, $USU = S$. Note that since $-1 \in U$, this condition automatically implies that $S$ is symmetric.
\end{definition}

\begin{rem}
    As explained in \cite[Proposition 2.1]{minavc2024complete}, an element $r \in R$ is left or right invertible if and only if it is invertible. Therefore, there is no left/right ambiguity in the definition of $U$ and $R^{\times}.$
\end{rem}

\begin{rem}
    When $U=R^{\times}$, we will use the term $R^{\times}$-unitary Cayley graph and gcd-graph interchangeably. 
\end{rem}
We define the following relation on $R$: we say that $x \sim_{U} y$ if $x=u_1 y u_2$ where $u_1, u_2 \in U.$ We can see that this is an equivalence relation. Furthermore, if we denote by $I_x$ (respectively $I_y$) the two-sided ideal generated by $x$ (respectively $y$) then $I_x = I_y.$ Here, we recall that the two-sided ideal $I_x$ is the set of elements of the forms $\sum_{i=1}^n a_{i} xb_{i}$ where $a_{i}, b_{i} \in R.$ 
\begin{rem}
We remark that, in contrast to the commutative case, if $I_x=I_y$, then it is not necessarily true that $x \sim_{R^{\times}} y.$ In fact, if $R=M_n(F)$, then $R$ has no proper two-sided ideals. Consequently, for each $A \neq 0$ in $M_n(F)$, the two-sided ideal generated by $A$ is $M_n(F)$ itself. 
\end{rem}
Let $\mathcal{K} = \{K_1, K_2, \ldots, K_{m} \}$ be the elements of the  double quotient  $U \backslash R \slash U$. In other words, $\mathcal{K}$ is precisely the equivalence classes of $R$ with respect to $\sim_{U}.$ By the definition, we have the following criterion for a graph to be $U$-unitary. 
\begin{prop}
The following conditions are equivalent.
\begin{enumerate}
    \item $\Gamma(R,S)$ is a $U$-unitary graph. 
    \item $S$ is a disjoint union of some orbits $K_i$.
\end{enumerate}
\end{prop}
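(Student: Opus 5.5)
The proof is a direct unwinding of the definitions. We use only that the classes $K_1,\dots,K_m$ of $\mathcal{K}$ partition $R$, and that each class has the form $K = UxU = \{u_1 x u_2 : u_1,u_2 \in U\}$ for any representative $x \in K$. This form follows because $\sim_U$ is an equivalence relation whose class of $x$ is exactly $UxU$. One small point: the classes here are exactly the orbits of the action $(u_1,u_2)\cdot x = u_1 x u_2^{-1}$ of $U\times U$ on $R$, which is the sense in which the statement calls them orbits.

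For (1) $\Rightarrow$ (2), suppose $USU = S$. Take any $s \in S$ and let $K_i$ be its class. Then $K_i = UsU \subseteq USU = S$. Hence $S = \bigcup_{s\in S} UsU$ is a union of classes $K_i$. The union is automatically disjoint because distinct equivalence classes are disjoint, so $S$ is the disjoint union of the $K_i$ meeting it.

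For (2) $\Rightarrow$ (1), write $S = \bigsqcup_{i \in J} K_i$ for some index set $J$. Since $1 \in U$, we always have $S = 1\cdot S\cdot 1 \subseteq USU$. For the reverse inclusion, take $u_1 s u_2$ with $s \in K_i$ and $i\in J$. By definition $u_1 s u_2 \sim_U s$, so $u_1 s u_2 \in K_i \subseteq S$. Therefore $USU = S$.

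There is also a side condition to check. The definition of a Cayley graph requires $0 \notin S$ and $S=-S$, and the proposition implicitly takes $S$ to be an admissible generating set; otherwise one should allow only classes $K_i \neq \{0\}$. Note that $\{0\} = U0U$ is itself a class, so excluding it is consistent with (2). Symmetry follows from $-1\in U$, as remarked in the definition.

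No step is a real obstacle. The only care needed is to state precisely that each class equals $UxU$, i.e.\ that $\sim_U$ is an equivalence relation. This requires $U$ to be a group: $1 \in U$ gives reflexivity, closure under inverses gives symmetry, and closure under products gives transitivity.
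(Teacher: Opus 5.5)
Your proposal is correct, and it is the same argument the paper relies on. The paper states this criterion without proof, as immediate from the definition of $\sim_U$; your observation that each class is $UxU$, so that $USU=S$ holds exactly when $S$ is a union of classes, is that unwinding made explicit.
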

\begin{cor}
    If $\Gamma(R,S)$ is a $U$-unitary Cayley graph, then its complement graph $\Gamma(R,S)^c$ is also a unitary Cayley graph. 
\end{cor}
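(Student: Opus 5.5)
The plan is to identify the generating set of the complement and check it against the criterion of the preceding proposition. The complement of a Cayley graph $\Gamma(R,S)$ on vertex set $R$ is again a Cayley graph. Two distinct vertices $a,b$ are adjacent in $\Gamma(R,S)^c$ exactly when $a-b \notin S$ and $a \neq b$. So $\Gamma(R,S)^c = \Gamma(R,S^c)$, where
\[
S^c = R \setminus (S \cup \{0\}).
\]
It then suffices to show that $S^c$ is an admissible generating set and that $US^cU = S^c$. Here "unitary Cayley graph" in the statement is read as "$U$-unitary Cayley graph" for the same $U$.

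\textbf{Orbit decomposition.} By the preceding proposition, $S$ is a disjoint union of some of the classes $K_i \in U\backslash R/U$. The class of $0$ is $\{u_1 0 u_2\} = \{0\}$, so $\{0\}$ is itself one of the $K_i$. It is not among the classes making up $S$, since $0 \notin S$. The classes $K_1,\dots,K_m$ partition $R$. Hence $S^c$ is exactly the union of the classes $K_i$ that are neither contained in $S$ nor equal to $\{0\}$. Applying the proposition in the reverse direction gives $US^cU = S^c$.

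\textbf{Admissibility.} By construction $0 \notin S^c$. Since $-1 \in U$, we have $-S^c = (-1)S^c \subseteq US^cU = S^c$, so $S^c$ is symmetric. Thus $\Gamma(R,S^c)$ satisfies the definition of a $U$-unitary Cayley graph.

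There is no real obstacle in this argument. The only point that needs care is noting that $\{0\}$ is its own orbit, so that removing $0$ preserves the property of being a union of orbits.
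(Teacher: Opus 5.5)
Your proposal is correct and matches the paper's proof: you identify the complement as $\Gamma(R,S')$ with $S' = R\setminus(S\cup\{0\})$, and note that $S'$ is a union of classes in $U\backslash R/U$ because $S$ and $\{0\}$ are. Your extra remarks, that $\{0\}$ is its own class and that $S'$ is symmetric because $-1\in U$, make explicit details the paper leaves implicit.
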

\begin{proof}
    By definition, $\Gamma(R, S)^c$ is precisely $\Gamma(R, S')$ where $S' = R \setminus (S \cup \{0\}) = \mathcal{K} \setminus (S \cup \{0\}).$ Since $S$ and $\{0\}$ are both unions of some orbits $K_i$'s, the same holds true for $S'.$ 
\end{proof}

\subsection{Graph-theoretic properties of $U$-unitary Cayley graphs}
\subsubsection{Connectedness}
In this section, we answer the following question: when is a $U$-unitary Cayley graph connected? To do so,  we first introduce the following convention: for each $1 \leq i \leq m$, let $I_i$ be the ideal generated by an element of $ x \in K_i$ (by definition of $K_i$, $I_i$ is independent of the choice of $x$). Let $\ell(K_i)$ be the smallest number such that every element $y$ in $I_i$ be written in the form $y = \sum_{i=1}^{\ell(K_i)} a_{i} x b_i$ where $a_i, b_i \in R$. We note that since $R$ is finite, such a number always exists. Furthermore, if $R$ is commutative, then $\ell(K_i)$ is either $0$ or $1.$  We now state the main result for the connectedness of a $U$-unitary Cayley graph.

\begin{prop} \label{prop:connectedness}
Let $\Gamma(R,S)$ be a $U$-unitary Cayley graph. Suppose that $\Gamma(R,U)$ is connected. Then the following conditions are equivalent. 
    \begin{enumerate}
        \item $\Gamma(R,S)$ is connected. 
        \item $R=\sum_{i} I_i$ where the sum is over all ideals $I_i$ such that $ K_i \subset S.$
    \end{enumerate}
    Furthermore, if one of these conditions is satisfied, then 
     \[ {\rm diam}(\Gamma(R,S)) \leq {\rm diam}(\Gamma(R,U))^2 \sum_{s=1}^t \ell(K_{i_s}) .\] 
    Here $t$ is the smallest positive integer in which there exists $i_1, i_2, \ldots, i_t$ such that $R=\sum_{s=1}^t I_{i_s}$ and $S \cap K_{i_s} \neq \emptyset.$
\end{prop}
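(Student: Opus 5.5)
The plan rests on the standard fact that a Cayley graph $\Gamma(R,S)$ is connected if and only if $S$ generates $(R,+)$. Moreover, the graph distance from $0$ to $y$ is the minimal number of elements of $S$ (with repetition) summing to $y$. Since $S=-S$, the additive subgroup generated by $S$ is the set of finite sums of elements of $S$, and the connected component of $0$ is exactly this subgroup. So both directions and the diameter bound reduce to analysing $\langle S\rangle$.

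For (1)$\Rightarrow$(2), write $J=\sum I_i$ over those $i$ with $K_i\subset S$. This is a two-sided ideal containing every element of $S$, hence containing $\langle S\rangle$. If the graph is connected, then $\langle S\rangle=R$, so $J=R$.

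For (2)$\Rightarrow$(1), the key input is the hypothesis that $\Gamma(R,U)$ is connected. Set $d={\rm diam}(\Gamma(R,U))$. Connectedness means every $a\in R$ is a sum $a=\sum_{j=1}^{k}u_j$ of at most $d$ units. Now fix $x\in K_i\subset S$ and arbitrary $a,b\in R$. Expand $a=\sum_{j\le d}u_j$ and $b=\sum_{k\le d}v_k$ with $u_j,v_k\in U$; here we use $-1\in U$ so that both signs are available. Then
\[ axb=\sum_{j,k}u_jxv_k ,\]
and each term $u_jxv_k$ lies in $UxU=K_i\subset S$. So $axb$ is a sum of at most $d^2$ elements of $S$. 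By the definition of $\ell(K_i)$, every $y\in I_i$ is a sum of at most $\ell(K_i)$ terms $axb$, hence a sum of at most $d^2\ell(K_i)$ elements of $S$. If $R=\sum_{s=1}^{t}I_{i_s}$ with $K_{i_s}\subset S$, then every $r\in R$ decomposes as $r=\sum_s y_s$ with $y_s\in I_{i_s}$. It is therefore a sum of at most $d^2\sum_s\ell(K_{i_s})$ elements of $S$, which gives connectedness and the diameter bound simultaneously, since ${\rm diam}(\Gamma(R,S))$ is the maximal such length over $r\in R$.

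Two small points need care. First, the statement defines $t$ through $S\cap K_{i_s}\neq\emptyset$; since $S$ is a union of orbits, this is the same as $K_{i_s}\subset S$. Second, the sum of at most $d$ units representing $a$ uses $\le d$ terms, and an empty sum handles $a=0$; this only affects the count, and the upper bound is unchanged. The main obstacle is purely bookkeeping: one must check that the $\ell(K_i)$ representation can be taken with a \emph{fixed} chosen $x$ in each orbit, independent of $y$. This holds because $\ell(K_i)$ does not depend on the choice of $x$, as different choices differ by units absorbed into $a_i,b_i$. Once that is settled, the length estimates simply multiply as above.
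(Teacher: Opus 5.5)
Your proposal is correct and follows essentially the same route as the paper. For (1)$\Rightarrow$(2) the paper writes $1$ as a $\Z$-linear combination of elements of $S$, which is equivalent to your $\langle S\rangle = R \subseteq \sum I_i$; for (2)$\Rightarrow$(1) and the diameter bound it likewise takes the $\ell(K_{i_s})$-representation and writes each $a,b$ as a sum of at most ${\rm diam}(\Gamma(R,U))$ units, so that each $axb$ is a sum of at most ${\rm diam}(\Gamma(R,U))^2$ elements of $K_{i_s}\subset S$. Your extra remarks (that $\ell(K_i)$ does not depend on the choice of $x$, and that $S\cap K_{i_s}\neq\emptyset$ is the same as $K_{i_s}\subset S$) are small points the paper leaves implicit.
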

\begin{proof}
    The proof for this statement is a slight modification of the one given for the commutative case in \cite[Proposition 3.10]{nguyen2025supercharacters}. We remark, however, that in this case the upper bound for the diameter of $\Gamma(R,S)$ is slightly larger due to the non-commutativity of $R.$

    We first show that $(1) \implies (2).$ Indeed, because $\Gamma(R,S)$ is connected, we can find a $\Z$-linear combination: $1 = \sum_{i} a_i s_i,$ 
    where $a_{i} \in \Z$ and $s_{i} \in S.$ By definition of $I_i$, we know that $\sum_{i} a_i s_i \in \sum_{i} I_i$. This shows that $1 \in \sum_{i} I_i$, which implies that $R=\sum_{i} I_i.$

    Conversely, suppose that $(2)$ holds. We will show that $\Gamma(R,S)$ is connected. In fact, let $r \in R.$ Since $R=\sum_{s=1}^t I_{i_s}$, we can write  
    \begin{equation} \label{eq:combination}
    r= \sum_{s=1}^t \left(\sum_{j=1}^{\ell(K_{i_s})} a_{i_s j} x_{i_s} b_{i_s j} \right),
    \end{equation}
     where $x_{i_s} \in K_{i_s}$, and  $a_{i_sj}, b_{i_s j} \in R.$ Furthermore, since $\Gamma(R,U)$ is connected, each $a_{i_s j}$ and $b_{i_s j}$ can be written as a sum of at most $\text{diam}(\Gamma(R,U))$ elements in $U$. Consequently, each term $a_{i_s j} x_{i_s} b_{i_s j}$ can be written as a $\Z$-linear combinations of at most $\text{diam}(\Gamma(R,U))^2$ elements in $K_{i_s}.$ From \cref{eq:combination}, we conclude that 
     \[ \text{diam}(\Gamma(R,S)) \leq \text{diam}(\Gamma(R,U))^2 \sum_{s=1}^t \ell(K_{i_s}).
     \qedhere \] 
\end{proof}

\begin{cor} \label{cor:matrix-connected}
    Let $R=M_n(F)$ where $n>1$ and $F$ be a finite field. Let $U \subset R^{\times} = \text{GL}_n(F)$ such that $\Gamma(R, U)$ is connected. Let $\Gamma(R,S)$ be a $U$-unitary Cayley graph such that $S$ is not an empty set. Then $\Gamma(R,S)$ is connected. 
\end{cor}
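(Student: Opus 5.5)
The plan is to apply \cref{prop:connectedness} directly. Its hypothesis that $\Gamma(R,U)$ is connected is assumed, so connectedness of $\Gamma(R,S)$ is equivalent to condition (2): $R$ equals the sum of the two-sided ideals $I_i$ over those orbits $K_i \subset S$. It therefore suffices to show that this sum of ideals is all of $M_n(F)$.

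The key input is that $M_n(F)$ is a simple ring: its only two-sided ideals are $0$ and $M_n(F)$, as already noted in the remark preceding the definition of $\mathcal{K}$. Since $S$ is nonempty and $0 \notin S$ by definition of a Cayley graph generating set, pick any $A \in S$ with $A \neq 0$. By the proposition characterizing $U$-unitary graphs, $S$ is a disjoint union of orbits, so the orbit $K_i$ containing $A$ satisfies $K_i \subset S$. Its ideal $I_i$ is the two-sided ideal generated by $A$, which is nonzero and hence equals $M_n(F)$. Consequently $\sum_{K_i \subset S} I_i \supseteq I_i = R$, so condition (2) holds and $\Gamma(R,S)$ is connected.

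If one wants to avoid quoting simplicity, the only step that needs any argument is $I_A = M_n(F)$ for $A \neq 0$. I would prove it directly with matrix units. Suppose $a_{kl} \neq 0$. Then $E_{ik} A E_{lj} = a_{kl} E_{ij}$ for all $i,j$. Scaling by $a_{kl}^{-1}$ (scalars lie in $R$), every $E_{ij}$ lies in $I_A$, and these span $M_n(F)$. This argument also gives the explicit bound $\ell(K_i) \le n^2$ for use in the diameter estimate of \cref{prop:connectedness}, with $t = 1$. Nothing here is a real obstacle; the hypothesis $n > 1$ is not even needed for the argument, since for $n = 1$ the ring is a field.
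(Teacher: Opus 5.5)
Your proof is correct and follows the paper's argument. The paper likewise deduces the corollary directly from \cref{prop:connectedness} together with the fact that $M_n(F)$ has no nonzero proper two-sided ideals, so any single orbit $K_i \subset S$ already generates $R$. Your matrix-unit verification of simplicity, and your remarks that $t=1$, $\ell(K_i)\le n^2$ and $n>1$ is not needed, are all correct supplements that the paper does not spell out.
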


\begin{proof}
This follows from \Cref{prop:connectedness} and the fact that the only two-sided ideals of $M_n(F)$ are $0$ and $M_n(F).$
\end{proof}

\begin{rem} \label{rem:sum-invertible}
    By \cite{invertible_matrices}, every square matrix of size $n \times n$ with $n>1$ is the sum of two invertible matrices. Therefore, the graph $\Gamma(M_n(F), GL_n(F))$ is always connected.  The above corollary shows that all non-empty $GL_n(F)$-unitary Cayley graphs are connected. 
\end{rem}

It turns out that every non-empty $SL_n(F)$-graph is connected. 
\begin{prop}
    The Cayley graph $\Gamma(M_n(F), SL_n(F))$ is connected. 
\end{prop}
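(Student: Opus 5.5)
Throughout I assume $n>1$, as in \Cref{cor:matrix-connected}. For $n=1$ we have $SL_1(F)=\{1\}$, and the statement can fail when $F$ is not a prime field, because $\mathbb{Z}\cdot 1$ is then only the prime subfield.

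The plan is to show that the additive subgroup $H$ of $M_n(F)$ generated by $SL_n(F)$ is all of $M_n(F)$. Since $M_n(F)$ is a finite additive group, the additive semigroup generated by $SL_n(F)$ coincides with the subgroup it generates. So $H=M_n(F)$ means every matrix is reachable from $0$ by a walk whose steps lie in $SL_n(F)$. The walk is traversed in the graph even if $SL_n(F)\neq -SL_n(F)$, which happens for odd $n$ in odd characteristic; in that case the graph is read with generating set $SL_n(F)\cup -SL_n(F)$. Hence $\Gamma(M_n(F),SL_n(F))$ is connected.

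The key structural observation is that $H$ is stable under left and right multiplication by $SL_n(F)$. Indeed, if $h=\sum \pm g_k$ with $g_k\in SL_n(F)$, then $ahb=\sum \pm a g_k b$, and each $a g_k b$ lies in $SL_n(F)$ whenever $a,b\in SL_n(F)$. So it suffices to produce the matrix units $cE_{ij}$, for all $c\in F$, inside $H$; every matrix is a sum of such matrices.

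First, the off-diagonal units. For $i\neq j$ and $c\in F$, the transvection $I+cE_{ij}$ lies in $SL_n(F)$, and so does $I$. Hence $cE_{ij}=(I+cE_{ij})-I\in H$.

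Second, the diagonal units, which is the only step needing an idea. Let $w=I-E_{11}-E_{22}+E_{21}-E_{12}$, the signed transposition of $e_1,e_2$. Its determinant is $1$, so $w\in SL_n(F)$. Then
\[ cE_{12}\,w = c\,E_{12}(E_{21}-E_{12}) = cE_{11} \in H. \]
Similarly, right-multiplying $cE_{ij}$ (with $j\neq i$) by the analogous signed transposition of $e_i,e_j$ gives $cE_{ii}\in H$. Alternatively, one conjugates $cE_{11}$ by permutation-type matrices in $SL_n(F)$.

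Combining the two steps, $H$ contains every $cE_{ij}$, so $H=M_n(F)$. I do not expect a real obstacle beyond choosing $w$ correctly. One could also bound the diameter along the lines of \Cref{prop:connectedness}, but that is not needed here.
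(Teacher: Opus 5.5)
Your proof is correct, but it takes a somewhat different route from the paper's.

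\emph{The paper's route.} It uses the proof of \cref{prop:similar-sln} (singular matrices of the same rank are $SL_n(F)$-equivalent) to reduce everything to the single rank-one matrix $aE_{11}$. It then writes $aE_{11}=X'+(-Y')$, where $X',Y'$ are explicit determinant-one matrices that differ only in the $(1,1)$ entry. To count $-Y'$ as a generator, it invokes the standing assumption $-I_n\in SL_n(F)$.

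\emph{Your route.} You get the off-diagonal units directly as transvection minus identity. You then reach the diagonal units with one right multiplication by a signed transposition. This uses only the fact that the additive subgroup $H$ is stable under two-sided multiplication by $SL_n(F)$, so the rank classification of double cosets is never needed.

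\emph{What each buys.} Because you work with the subgroup generated, which equals the semigroup generated in a finite group, your argument does not need $-I_n\in SL_n(F)$. It therefore also covers odd $n$ in odd characteristic, with the symmetrized generating set. Your remark that $n>1$ is genuinely required is correct; the paper leaves it implicit. The paper's route, in exchange, is shorter once \cref{prop:similar-sln} is available: a single explicit $2\times 2$ identity finishes the proof, and it ties connectivity directly to the $SL_n(F)$ double-coset structure.
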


\begin{rem}
    We remark that since we only work with undirected graphs, we assume implicitly here that $-I_n \in SL_n(F).$ This happens only if $\text{char}(F)=2$ or $n$ is even. 
\end{rem}
\begin{proof}
    By the proof of \cref{prop:similar-sln}, for two singular matrices $A,B$, $A \sim_{GL_n(F)} B$ if and only if $A \sim_{SL_n(F)} B.$ Furthermore, since every matrix is a sum of matrices with exactly one non-zero element, it is sufficient to show that for each $a \in F$, the following can be written as a sum of elements in $SL_n(F)$ 
\[A = a \oplus 0_{n-1} = \begin{bmatrix} a & 0 \\ 0 & 0 \end{bmatrix}. \]
Here $0_{n-1}$ is the $(n-1) \times (n-1)$ zero matrix in $M_{n-1}(F).$ Let $X$ and $Y$ be as follows. 

\[ X = \begin{bmatrix} 1+a & 1 \\ -1 & 0 \end{bmatrix}, X' = X \oplus I_{n-2}, \]
\[ Y = \begin{bmatrix} 1 & 1 \\ -1 & 0 \end{bmatrix}, X' = Y' \oplus I_{n-2}, \]
Then $A=X'+ (-Y')$ and $\det(X')=\det(-Y')=1.$ We remark that we implicitly use the assumption that $\det(-I_n)=1.$
\end{proof}

\subsubsection{Primeness}
In this section, we study the primeness of $U$-unitary Cayley graphs. First, we need to recall this definition.

A subset $X$ in a graph $G$ is called a homogeneous set if every vertex in $V(G) \setminus X$ is adjacent to either all or none of the vertices in $X$. By definition, if $X=V(G)$ or $|X|=1$ then $X$ is a homogeneous set-- it is called a trivial homogeneous set. Otherwise, a homogeneous set $X$ with $2 \leq X < |V(G)|$ is called non-trivial. The graph $G$ is said to be prime if it does not contain any non-trivial homogeneous sets. We note that, by definition, a set $X$ is homogeneous in $G$ if and only if $X$ is homogeneous in the complement $G^c$ of $G.$ Additionally, we also note that the notion of a homogeneous set generalizes the notion of a connected component; namely, a connected component of a graph is always a homogeneous set. For this reason, when studying prime graphs, we can safely assume that $\Gamma(R,S)$ and its complement are both connected.

As explained in previous works such as \cite{chudnovsky2024prime, nguyen2025supercharacters, nguyengcd2026}, the existence of a homogeneous set on a Cayley graph requires some rather strong conditions on the generating set.  In particular, in \cite{nguyen2025supercharacters}, we show that if a $U$-unitary Cayley graph over a commutative ring is not prime, then there exists a proper ideal $I$; namely, $I \neq 0$ and $I \neq R$, such that $I$ is a homogeneous set. This statement can be generalized to the non-commutative setting.

\begin{prop} \label{prop:homogeneous_ideal}
Suppose that $\Gamma(R,U)$ is connected. Suppose further that $\Gamma(R,S)$ is both connected and anti-connected. Then, the following conditions are equivalent. 
\begin{enumerate}
    \item $\Gamma(R,S)$ is not a prime graph. 
    \item There exists a proper two-sided ideal $I$ in $R$ such that $I$ is a homogeneous set in $\Gamma(R,S).$
\end{enumerate}
\end{prop}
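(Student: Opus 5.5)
(2)$\Rightarrow$(1) is immediate: a proper two-sided ideal $I$ with $I\neq 0$ and $I\neq R$ satisfies $2\le |I|<|R|$. Hence it is a non-trivial homogeneous set, and $\Gamma(R,S)$ is not prime. The content is (1)$\Rightarrow$(2). We follow the commutative argument of \cite{nguyen2025supercharacters} and use the known structure of homogeneous sets in Cayley graphs (as in \cite{chudnovsky2024prime}).

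\textbf{Step 1.} Pick a non-trivial homogeneous set $X$; translating, we may assume $0\in X$. Since $\Gamma(R,S)$ is connected and anti-connected, the standard Cayley-graph argument lets us replace $X$ by the smallest homogeneous set $Y$ containing $0$ and some fixed $x\in X\setminus\{0\}$. We claim $Y$ is an additive subgroup $H$ of $R$ with $0<|H|<|R|$. The point is that if $Y$ is homogeneous then so is every translate $Y+r$. Moreover, the intersection and union of two overlapping homogeneous sets are again homogeneous.

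\textbf{Step 2.} Consider the subgroup $H_0$ generated by the differences of $Y$, equivalently the orbit of $0$ under translations by $Y-Y$. Since translates of $Y$ that meet $Y$ have homogeneous union, either all of $R$ is homogeneous-covered, which would contradict $Y$ being proper, or the union stabilizes at the coset structure of $H_0$. Carrying this out shows $H_0=Y$ is a proper non-trivial subgroup. This is exactly where connectedness and anti-connectedness are used: they force the union of overlapping translates to remain proper. Among all such subgroups we take $H$ to be the \emph{largest} proper additive subgroup that is homogeneous. Such a maximal one exists, since a subgroup generated by two overlapping homogeneous subgroups is again homogeneous and, by connectivity, still proper.

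\textbf{Step 3.} Show $H$ is a two-sided ideal. For $u,v\in U$, the map $r\mapsto urv$ is a graph automorphism of $\Gamma(R,S)$, because $USU=S$. Hence $uHv$ is again a proper homogeneous additive subgroup of the same size. By maximality together with closure under sums of overlapping homogeneous subgroups, $H+uHv$ is homogeneous and proper, so $uHv\subseteq H$. Thus $H$ is stable under left and right multiplication by $U$. Since $\Gamma(R,U)$ is connected, every $a\in R$ is a $\Z$-linear combination of units. It follows that $aH\subseteq H$ and $Ha\subseteq H$ for all $a\in R$, so $H$ is a two-sided ideal with $0\neq H\neq R$.

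\textbf{Main obstacle.} The main obstacle is the closure claim in Step 2: that the sum of two homogeneous subgroups meeting at $0$ is homogeneous and still proper. We would prove it by first showing that for a homogeneous subgroup $H$ the set $S$ is a union of $H$-cosets outside $H$, i.e.\ $S\setminus H=(S\setminus H)+H$. If $H_1,H_2$ both have this property, then $S\setminus(H_1+H_2)$ is $(H_1+H_2)$-stable. If $H_1+H_2=R$, then $S$ would have to be contained in $H_1\cup H_2$ in a way incompatible with both connectedness and anti-connectedness; this is the delicate point.
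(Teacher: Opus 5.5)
\textbf{Verdict.} Your skeleton is the paper's: get a maximal proper homogeneous subgroup, use that $r\mapsto urv$ is an automorphism to get $U$-stability from maximality, then pass from $U$ to $R$ via connectedness of $\Gamma(R,U)$. Step 3 is fine once its input is available. However, Steps 1--2 do not supply that input, and one claim there is false.

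\textbf{The false claim.} The smallest homogeneous set containing $0$ and $x$ need not be a subgroup, even under all hypotheses of the proposition. Take $R=\Z/30$, $U=\{\pm1\}$ and $S=\{s: s\equiv \pm1 \pmod{5}\}\cup\{15\}$.
\begin{itemize}
\item $\Gamma(R,U)$ is a $30$-cycle, hence connected.
\item $\Gamma(R,S)$ is connected since $1\in S$, and anti-connected since $2,3\notin S\cup\{0\}$.
\item Any homogeneous set containing $0$ and $10$ must contain $15$, which is adjacent to $0$ but not to $10$. It must also contain $25$, which is adjacent to $10$ but not to $0$.
\item The set $\{0,10,15,25\}$ is already homogeneous, and $4\nmid 30$.
\end{itemize}
So $Y$ is not a subgroup, and ``$H_0=Y$'' fails (here $H_0=5\Z/30$). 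Your argument therefore never actually produces a non-trivial proper homogeneous subgroup.

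\textbf{The missing lemma.} What you call ``the delicate point'' and leave unproved is the real content: in a connected, anti-connected graph, if $X,Y$ are proper homogeneous sets with $X\cap Y\neq\emptyset$, then $X\cup Y\neq V$.

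Here is a proof. Suppose $X\cup Y=V$, and put $A=V\setminus X\subseteq Y$ and $B=V\setminus Y\subseteq X$; both are nonempty. Pick $a\in A$ and $b\in B$.
\begin{itemize}
\item If $a\sim b$, then $b\notin Y$ forces $b$ to be complete to $Y\supseteq A$. Then every $a'\in A$ is adjacent to $b\in X$, so $a'$ is complete to $X=V\setminus A$. This disconnects the complement.
\item If $a\not\sim b$, the same argument makes $A$ anticomplete to $V\setminus A$, which disconnects the graph.
\end{itemize}

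\textbf{The fix.} Maximize over homogeneous \emph{sets}, not subgroups, as the paper does. Let $I$ be a maximal proper homogeneous set containing $0$.
\begin{itemize}
\item For $h\in I$, the translate $I-h$ is homogeneous and contains $0$. By the lemma, $I\cup(I-h)$ is proper and homogeneous, so maximality gives $I-h\subseteq I$. Hence $I$ is a subgroup; this is \cite[Theorem 3.4]{chudnovsky2024prime}, which the paper cites.
\item Likewise $uIv$ is homogeneous and contains $0$, so $uIv\subseteq I$, and your Step 3 finishes the proof.
\end{itemize}
Your coset route to sum-closure can also be completed. Suppose $H_1+H_2=R$. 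Connectedness gives some $h_2\in H_2\setminus H_1$ with $h_2+H_1\subseteq S$. Then $h_1+H_2\subseteq S$ for every $h_1\in H_1\setminus H_2$, i.e.\ $R\setminus H_2\subseteq S$, contradicting anti-connectedness. Note that the conclusion is this, not ``$S\subseteq H_1\cup H_2$'' as you suggest. Even so, you still need the lemma above to obtain a homogeneous subgroup in the first place.
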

\begin{proof}
By definition, $(2) \implies (1).$ Let us show that $(1) \implies (2).$ By \cite[Theorem 3.4]{chudnovsky2024prime}, if $I$ is a maximal non-trivial homogeneous set of $\Gamma(R,S)$ containing $0$, then $I$ is a subgroup of $(R, +).$  We claim that $I$ is a left ideal in $R$ as well. For each $u \in U$,  the left multiplication by $u$ is an automorphism of $\Gamma(R,S).$ Consequently, $uI$ is also a homogeneous set. Since $0 \in I \cap uI$, we know that  $I \cup uI$ is also a homogeneous set (see \cite[Lemma 3.1]{chudnovsky2024prime}). By the maximality of $I$, we must have $uI=I$. We conclude that $I$ is stable under the left action of $U.$ We now show that if $r \in R$, then $rI \subset I$. In fact, since $\Gamma(R,U)$ is connected, we can write
$r = \sum_{i=1}^d m_i u_i,$ where $m_i \in \Z$ and $u_i \in U.$ For each $h \in I$, we have  $rh = \sum_{i=1}^d m_i (u_i h).$
 Since $u_i h \in I$ and $I$ is a subgroup of $(R, +)$, we conclude that $rh \in I.$ This shows that $rI \subset I$ for all $r \in R$. Therefore, $I$ is a left ideal in $R$. An identical argument shows that $I$ is also a right ideal in $R.$ We conclude that $I$ is a two-sided ideal in $R.$
\end{proof}
Let us discuss some corollaries of \cref{prop:homogeneous_ideal}.
\begin{cor}
    Let $R=M_n(F)$ with $n>2$ and $F$ be a finite field. Let $U \subset R^{\times} = \text{GL}_n(F)$ such that $\Gamma(R, U)$ is connected. Let $\Gamma(R,S)$ be a $U$-unitary Cayley graph such that $S \neq \emptyset$ and $S \neq M_n(F) \setminus \{0\}$ (equivalently $\Gamma(R,S)$ is not a complete or empty graph). Then $\Gamma(R,S)$ is prime. In particular, if $U=GL_n(F)$ or $U=SL_n(F)$, then $\Gamma(R,S)$ is always prime. 
\end{cor}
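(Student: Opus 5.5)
The plan is to reduce to \cref{prop:homogeneous_ideal}, using that $M_n(F)$ is simple. We need three things: $\Gamma(R,U)$ is connected, $\Gamma(R,S)$ is connected, and $\Gamma(R,S)$ is anti-connected, i.e.\ its complement is connected.

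First, since $S \neq \emptyset$, \cref{cor:matrix-connected} shows that $\Gamma(R,S)$ is connected. For the complement, by the corollary following the characterization of $U$-unitary graphs, $\Gamma(R,S)^c = \Gamma(R,S')$ with $S' = R \setminus (S\cup\{0\})$. This $S'$ is again a union of $\sim_U$-orbits, so $\Gamma(R,S')$ is $U$-unitary. The hypothesis $S \neq M_n(F)\setminus\{0\}$ gives $S' \neq \emptyset$, so \cref{cor:matrix-connected} again shows that $\Gamma(R,S')$ is connected. Hence $\Gamma(R,S)$ is anti-connected.

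Now suppose $\Gamma(R,S)$ were not prime. By \cref{prop:homogeneous_ideal} there would be a proper two-sided ideal $I$ with $I\neq 0$ and $I\neq R$ that is a homogeneous set. But the only two-sided ideals of $M_n(F)$ are $0$ and $M_n(F)$, which is a contradiction. So $\Gamma(R,S)$ is prime.

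For the final assertion, $\Gamma(M_n(F),GL_n(F))$ is connected by \cref{rem:sum-invertible}, and $\Gamma(M_n(F),SL_n(F))$ is connected by the proposition preceding the primeness discussion, under the standing convention $-I_n\in SL_n(F)$. The main point of care is not a real obstacle. We should check that "nontrivial homogeneous set" in \cref{prop:homogeneous_ideal} corresponds to a nonzero proper ideal; a homogeneous set of size at least $2$ containing $0$ rules out $I=0$. We should also note that the condition $n>2$ plays no essential role in this argument beyond $n>1$, which is what \cref{cor:matrix-connected} requires. I expect nothing further to be needed.
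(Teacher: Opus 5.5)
Your proposal is correct and follows the paper's proof. You get connectedness and anti-connectedness from \cref{cor:matrix-connected}, then conclude with \cref{prop:homogeneous_ideal} and the fact that $M_n(F)$ has no nonzero proper two-sided ideals; you also make explicit what the paper leaves implicit, namely that the complement is the $U$-unitary graph $\Gamma(R,S')$ with $S'\neq\emptyset$, and you rightly note that only $n>1$ is used and that the $SL_n(F)$ case relies on the convention $-I_n\in SL_n(F)$.
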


\begin{proof}
    By \cref{cor:matrix-connected}, we know that $\Gamma(R,S)$ is both connected and anti-connected. Furthermore, $M_n(F)$ has no proper two-sided ideals. Therefore, \cref{prop:homogeneous_ideal} shows that $\Gamma(R,S)$ is prime. 
\end{proof}

In the case $S=U$, we have a rather strict condition when $\Gamma(R,U)$ is not prime. To discuss this restriction,  we recall that the Jacobson radical $\Rad(R)$ of $R$ is the intersection of all left maximal ideals in $R$ (see \cite[Chapter 4.3]{pierce1982associative}). It is known that $\Rad(R)$ is a two-sided ideal in $R$. 

\begin{prop}
    Let $I$ be a left (or right) ideal, which is also a homogeneous set in $\Gamma(R,U).$ Then $I + U \subset U.$ Furthermore, $I \subset \Rad(R).$
\end{prop}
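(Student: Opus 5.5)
We treat $I$ as a left ideal; the right-ideal case is symmetric, with multiplication on the other side. The plan has two steps: first use homogeneity at the vertex $1$ to get $I+U\subset U$, then convert that into $I\subset\Rad(R)$ through the standard characterization of the Jacobson radical.

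\emph{Step 1: $I+U\subset U$.} Let $i \in I$ and $u \in U$. In $\Gamma(R,U)$ the vertex $1 \notin I$, since otherwise $I = R$ and the homogeneous set is trivial. Implicitly we assume $I \neq R$; if $I=0$ the statement is trivial. The vertex $1$ is adjacent to $0 \in I$, because $1-0=1 \in U$. By homogeneity, $1$ is therefore adjacent to every element of $I$, so $1 - i \in U$ for all $i \in I$.

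Now take an arbitrary $u \in U$. We have $u^{-1}i \in I$, because $I$ is a left ideal. Applying the previous observation gives $1 - u^{-1} i \in U$. Multiplying on the left by $u$ gives $u - i \in U$. Since $I=-I$, this shows $u+i \in U$ for all $i\in I$, that is, $I+U\subset U$. For a right ideal we would use $iu^{-1}$ and multiply by $u$ on the right instead.

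\emph{Step 2: $I\subset \Rad(R)$.} We use the standard characterization (see \cite[Chapter 4.3]{pierce1982associative}): $x \in \Rad(R)$ if and only if $1 - rx$ is left invertible for all $r \in R$. Let $x\in I$ and $r\in R$. Then $rx \in I$ because $I$ is a left ideal, so by Step 1, $1 - rx \in 1 + I \subset U$. In particular $1-rx$ is invertible, hence $x \in \Rad(R)$. For a right ideal we use the symmetric characterization with $1 - xr$, which is valid since $\Rad(R)$ is also the intersection of the maximal right ideals.

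Alternatively, Step 2 can be done directly. Suppose $x \in I\setminus \mathfrak{m}$ for some maximal left ideal $\mathfrak{m}$. Then $Rx+\mathfrak{m}=R$, so $1 = rx + m$ for some $r\in R$ and $m\in\mathfrak{m}$. This gives $m = 1-rx \in U$, which contradicts the properness of $\mathfrak{m}$.

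The main point to handle carefully is Step 1. One has to check that $1\notin I$ and that $I$ is nonzero, so that homogeneity genuinely applies and forces adjacency. One also has to use the left-ideal property in the correct order when transporting $1 - I\subset U$ to $u - I \subset U$. The rest is routine.
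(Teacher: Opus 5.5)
Your proof is correct and is essentially the paper's argument. The paper applies homogeneity directly at every vertex $u\in U$, which lies outside $I$ because a proper one-sided ideal contains no units; the edge $(u,0)$ then forces $u+x\in U$ for all $x\in I$. You apply homogeneity only at the vertex $1$ and transport to arbitrary $u$ via $u^{-1}i\in I$, which is an equivalent minor variation. For $I\subset\Rad(R)$ the paper simply cites \cite[Chapter 4.3]{pierce1982associative}, and your Step 2 is the standard argument behind that citation; you also correctly make explicit the implicit hypothesis $I\neq R$.
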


\begin{proof}
    Since $U \subset R^{\times}$, $U \cap I = \emptyset.$  Let $u$ be any element in $U$. Then $(u,0)$ is an edge in $\Gamma(R,U).$ Since $0 \in I$ and $I$ is homogeneous, $(u, -x)$ is also an edge for each $x \in I.$ By definition, this shows that $u+x \in U.$ Hence $I+U\subset U$.
    The fact that $I \subset \Rad(R)$ follows from \cite[Chapter 4.3]{pierce1982associative}). 
\end{proof}
We have the following immediate corollary. 
\begin{cor} \label{cor:prime}
    If $R$ is semisimple, that is, $\Rad(R)=0$, then there is no two-sided ideal $I$ such that $I$ is homogeneous in $\Gamma(R,U).$ In particular, if $\Gamma(R,U)$ is connected and anti-connected, then $\Gamma(R,U)$ is prime. 
\end{cor}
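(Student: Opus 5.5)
The corollary is essentially a direct combination of the preceding proposition with \cref{prop:homogeneous_ideal}. For the first assertion, I would argue by contradiction. Suppose $I$ is a two-sided ideal of $R$ that is a homogeneous set in $\Gamma(R,U)$ and is non-trivial in the relevant sense, i.e.\ $I\neq 0$ (a two-sided ideal with $|I|\ge 2$). Since $I$ is in particular a left ideal, the preceding proposition applies and gives $I\subset \Rad(R)$. Semisimplicity gives $\Rad(R)=0$, so $I=0$, a contradiction. The one point to be careful about is the case $I=R$. This is a trivial homogeneous set (it is all of $V(G)$), so the statement should be read as excluding it; indeed the proposition's first conclusion $I+U\subset U$ would force $R\subset U$, which is impossible since $0\notin U$. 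So no proper non-zero two-sided ideal is homogeneous.

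For the second assertion, I assume $\Gamma(R,U)$ is connected and anti-connected. The hypotheses of \cref{prop:homogeneous_ideal} then hold with $S=U$. Here $\Gamma(R,U)$ is itself the connected graph required in that proposition, and $U$ is a valid generating set because $-1\in U$ and $UUU=U$, so $\Gamma(R,U)$ is a $U$-unitary Cayley graph. If $\Gamma(R,U)$ were not prime, \cref{prop:homogeneous_ideal} would produce a proper two-sided ideal $I$ (with $I\neq 0$ and $I\neq R$) that is homogeneous in $\Gamma(R,U)$. This contradicts the first part, so $\Gamma(R,U)$ is prime.

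There is no real obstacle here. The only subtlety is matching the word ``proper'' in \cref{prop:homogeneous_ideal}, meaning $0\neq I\neq R$, with the non-triviality needed in the first part. I also need to check that invoking the preceding proposition only requires $I$ to be a one-sided ideal, which a two-sided ideal certainly is.
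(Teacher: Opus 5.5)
Your proposal is correct and matches the paper, which presents this as an immediate consequence of the preceding proposition (a homogeneous ideal lies in $\Rad(R)=0$), combined with \cref{prop:homogeneous_ideal} applied with $S=U$. One small clarification: the preceding proposition tacitly requires $I$ to be proper, since its proof uses $I\cap U=\emptyset$. Your observation that $I=R$ would force $R\subset U$ therefore exposes that hidden hypothesis; it says nothing new about $R$, which is simply a trivial homogeneous set.
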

\subsubsection{Unitary Cayley graphs}
In this section, we focus on the case $S=R^{\times}$. In this case, the associated graph is denoted by $G_R$ and is well-known as the unitary Cayley graph of $R$ (see \cite{unitary, chen2022unitary}). This type of graph has a rich history; we can trace its roots in the work of Evans and Erdős in \cite{erdos1989representations}. In \cite{chudnovsky2024prime, minavc2024complete, nguyengcd2026}, we provide a complete classification of finite commutative rings $R$ such that $G_{R}$ is connected/prime. In this section, we give a similar answer for all finite unital rings. 

 We note that, by \cite[Proposition 4.30]{chudnovsky2024prime}, $\Rad(R)$ is a homogeneous set in $G_R.$ By the same argument as in \cite[Corollary 4.2]{chudnovsky2024prime}, we have the following isomorphism 
\[ G_R \cong G_{R^{\s}} * E_n, \]
here $R^{\s} = R/\Rad(R)$ is the simplification of $R$, $E_n$ is the empty graph on $n = |\Rad(R)|$ vertices, and $*$ denotes the wreath product of two graphs (see \cite[Definition 2.5]{chudnovsky2024prime} for the definition of the wreath product of graphs). This isomorphism shows that $G_{R}$ is connected if and only if $G_{R^{\s}}$ is connected. Furthermore, if $G_{R}$ is prime then $\Rad(R)=0$; namely $R=R^{\s}$. Therefore, we can assume from now on that $R=R^{\s}.$ In this case, the Artin-Wedderburn theorem implies that $R^{\s}$ is a product of local semisimple rings  
\begin{equation} \label{eq:product_rings}
R^{\s} = \prod_{i=1}^s R_i \times \prod_{i=1}^r M_{d_i}(F_i).
\end{equation}
Here $R_i$ is a finite field such that $2 \leq |R_1| \leq |R_2|\leq \cdots \leq |R_s|$. Additionally, $d_i \geq 2$, and $F_i$ is a finite field. We can then see that $G_{R^{\s}}$ is a direct product of unitary Cayley graphs 
\[ G_{R^{\s}} = \prod_{i=1}^s G_{R_i} \times \prod_{i=1}^r G_{M_{d_i}(F_i)} = \prod_{i=1}^s K_{|R_i|} \times \prod_{i=1}^r G_{M_{d_i}(F_i)}.\]
Here, $K_n$ is the complete graph on $n$ vertices.  We have the following proposition, which is a direct generalization of \cite[Lemma 4.33]{chudnovsky2024prime},   \cite[Theorem 3.6]{nguyengcd2026}, and \cite[Theorem 1.1]{maimani2010rings}.
\begin{prop} \label{prop:unitary-connected}
    $G_{R^\s}$ (and hence equivalently $G_{R}$) is  connected if and only if in the above decomposition, there is at most one $i \in \{1, \ldots, s\}$ such that $|R_i|=2.$
\end{prop}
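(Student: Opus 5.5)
The plan is to reduce to the components of the direct product and then use the standard criterion for connectedness of a direct (tensor) product of graphs. Weichsel's theorem says that a direct product of connected graphs is connected if and only if at most one factor is bipartite. It also says that a product in which some factor is disconnected (and has edges) is disconnected.

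Since a ring isomorphism carries units to units, the decomposition in \cref{eq:product_rings} gives
\[ G_{R^{\s}} \cong \prod_{i=1}^s K_{|R_i|} \times \prod_{i=1}^r G_{M_{d_i}(F_i)}. \]
First I verify the properties of each factor.
\begin{itemize}
\item For $d_i\ge 2$, \cref{rem:sum-invertible} shows that $G_{M_{d_i}(F_i)}=\Gamma(M_{d_i}(F_i),GL_{d_i}(F_i))$ is connected.
\item This graph is also non-bipartite. Take a triangle $0, I_n, A$, where $A$ and $A-I_n$ are both invertible. For $|F_i|>2$ one can choose $A=cI$ with $c\neq 0,1$. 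Over $\F_2$, choose $A$ to be a companion matrix of an irreducible polynomial $f$ of degree $d_i$ with $f(1)\ne 0$ (for instance $x^2+x+1$ in degree $2$), so that neither $0$ nor $1$ is an eigenvalue of $A$.
\item For the field factors, $K_{|R_i|}$ is connected for $|R_i|\ge 2$, and it is bipartite exactly when $|R_i|=2$, in which case it is $K_2$.
\end{itemize}
Every factor is therefore connected. Since the only bipartite factors are the $K_2$'s, Weichsel's criterion says $G_{R^{\s}}$ is connected exactly when at most one index has $|R_i|=2$.

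If I prefer not to cite Weichsel, I can argue directly. Suppose two factors are $K_2$. Then the map sending $x$ to the sum of its two $\F_2$-coordinates is a $2$-colouring that every edge flips, because each unit has both of those coordinates equal to $1$. Hence $(0,\dots,0)$ and $(1,0,\dots,0)$ lie in different components, and the graph is disconnected.

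Conversely, suppose there is at most one $K_2$ factor. Then every other factor contains an odd closed walk at every vertex, and hence closed walks of all sufficiently large lengths. Given two vertices, I choose a walk in each factor between their coordinates, all of the same common length $L$. In the $K_2$ factor this requires $L$ to have the correct parity; in the other factors any large $L$ works, by padding a shortest walk with an odd cycle and back-and-forth steps. This produces a walk in the product, so the product is connected.

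The main obstacle is the non-bipartiteness of $G_{M_d(\F_2)}$, which is needed so that the matrix factors do not behave like $K_2$. The companion-matrix construction handles it: the triangle $0, I, A$ works because $\det A=f(0)\neq 0$ and $\det(A-I)=\pm f(1)\neq0$. Irreducible polynomials of every degree $d\ge2$ over $\F_2$ exist and satisfy $f(0)=f(1)=1$, so this works for all $d\ge 2$.
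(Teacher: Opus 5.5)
Your argument is correct and is essentially the paper's. You decompose $G_{R^{\s}}$ as the direct product of the $K_{|R_i|}$ and the $G_{M_{d_i}(F_i)}$, observe that every factor is connected and only the $K_2$'s are bipartite, and apply Weichsel's criterion, which is exactly what the paper's citation of the Handbook of Product Graphs provides. The only difference is that you prove non-bipartiteness of $G_{M_d(F)}$ directly, via the triangle $0, I, A$, where the paper cites it.

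There is one slip in your optional direct argument. The sum of the two $\F_2$-coordinates is \emph{preserved} along every edge, not flipped: each coordinate changes by $1$, so the sum changes by $1+1=0$. It is this invariance that puts $(0,\dots,0)$ and $(1,0,\dots,0)$ in different components; a $2$-colouring flipped by every edge would only show bipartiteness.
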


\begin{proof}
    If there are more than two $i$ such that $|R_i|=2$, then the direct product contains a copy of $K_2 \times K_2$ which is not connected. Therefore, $G_{R^{\s}}$ is also not connected. Conversely, suppose that there is at most one $i$ such that $|R_i|=2.$ By our ordering, $|R_1|=2$ and $|R_k|>2$ for each $2 \leq k \leq s. $ For these $k$, each graph $G_{R_{k}}$ is connected and non-bipartite. Similarly, for $1 \leq i \leq r$, $G_{M_{d_i}(F_i)}$ is also connected and non-bipartite (see \cite[Proposition 3.5]{minavc2024complete}). By \cite[Corollary 5.10]{hammack2011handbook}, $G_{R^\s}$ is connected. 
\end{proof}

We now classify $R$ such that $G_{R}$ is prime. As explained above, if $G_{R}$ is prime, then $R$ is necessarily semisimple; namely $R=R^{\s}.$ Additionally, $G_{R}$ must also be connected and anti-connected. By \cref{prop:unitary-connected} if $G_{R^\s}$ is connected, there is at most one $i$ such that $|R_i|=2$ in the Artin-Wedderburn decomposition of $R.$ For anti-connected, we have the following simple observation. 

\begin{prop} \label{prop:unitary-anti-connected}
    $G_{R^{\s}}$ is not anti-connected if and only if $R^{\s}$ is a field.  
\end{prop}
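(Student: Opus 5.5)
We prove both directions by looking at the complement graph $G_{R^{\s}}^c$. In $G_{R^{\s}}^c$, two distinct vertices $a,b$ are adjacent exactly when $a-b$ is a non-unit, i.e. $a-b \in Z := R^{\s}\setminus (R^{\s})^{\times}$ with $a \neq b$.

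First, suppose $R^{\s}$ is a field $F$. Then $Z=\{0\}$. So $G_{R^{\s}}^c$ has no edges, and $G_{R^{\s}}=K_{|F|}$. Since $|F|\ge 2$, the complement is disconnected, so $G_{R^{\s}}$ is not anti-connected.

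Conversely, suppose $R^{\s}$ is not a field. We show $G_{R^{\s}}^c$ is connected, which means showing that the additive subgroup generated by $Z$ is all of $R^{\s}$. We use the Artin--Wedderburn decomposition \eqref{eq:product_rings}, which has either at least two factors or a single factor $M_d(F)$ with $d\ge 2$.

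\textbf{Case A: a single factor $M_d(F)$ with $d \geq 2$.} Every matrix unit $E_{ij}$ is singular, hence lies in $Z$. Every matrix is a sum of scalar multiples of matrix units, and each $cE_{ij}$ is singular. So every element of $R^{\s}$ is a finite sum of elements of $Z$. Since $Z=-Z$, the connected component of $0$ in the complement contains everything, so the complement is connected.

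\textbf{Case B: at least two factors, $R^{\s}=A\times B$ with $A,B$ nonzero.} Any element $(a,0)$ or $(0,b)$ is a non-unit. Every $(a,b)$ equals $(a,0)+(0,b)$, a sum of two elements of $Z$. Hence the complement is connected, with diameter at most $2$.

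In both cases the additive span of $Z$ is $R^{\s}$. Since the complement is the Cayley graph $\Gamma(R^{\s}, Z\setminus\{0\})$, it is connected, exactly as in the first part of the proof of \cref{prop:connectedness}.

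The step needing most care is the reduction that ``not a field'' forces either several factors or a matrix factor of size at least $2$. This follows directly from the Artin--Wedderburn decomposition, since the only single-factor case that is not a matrix ring of size $\ge 2$ is a single field $R_1$. Everything else is a routine verification that suitable elements are non-units.
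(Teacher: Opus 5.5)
Your proof is correct and follows the paper's argument. The field case and the product case (writing $(a,b)=(a,0)+(0,b)$) are the same as the paper's. In the $M_d(F)$ case you decompose a matrix into singular multiples $cE_{ij}$ of matrix units, whereas the paper writes $A=A_1+(A-A_1)$ with $A_1$ obtained by zeroing the last column; that split also shows the complement has diameter at most $2$, but either argument suffices.

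One small citation slip: at the end you need that a Cayley graph whose connection set additively generates the group is connected. That is standard, and your explicit walks already show it. It is not the direction proved in the first part of the proof of \cref{prop:connectedness}, which shows that connectedness implies generation.
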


\begin{proof}
    If $R^{\s}$ is a field then $G_{R^{\s}}$ is a complete graph. Therefore, its complement is not connected. Conversely, assume that $R^{\s}$ is not a field. There are two cases to consider. 

\textbf{Case 1.} $R^{\s}$ is a product of two rings; say $R=R_1 \times R_2.$ We claim that for each $(r_1, r_2) \in R_1 \times R_2$, there is a walk in $G_{R^{\s}}^c$ between $(0,0)$ and $(r_1, r_2)$ (by a translation, this shows that there is a walk between any two vertices in $G_{R^{\s}}$. If one of $r_1$ or $r_2$ is not a unit then $(0,0)$ and $(r_1, r_2)$ are adjacent in $G_{R^\s}^c.$ Now, suppose that $r_1, r_2$ are both units. Then have the following walk 
\[ (0,0) \to (r_1, 0) \to (r_1, r_2). \]

\textbf{Case 2.} $R^{\s} = M_n(F)$ for some $n>1$ and $F$ is a field. Let $A=[v_1 \quad v_2 \quad \ldots \quad v_n]$ be a matrix formed by column vectors $\{v_1, v_2, \ldots, v_n\}.$ We claim that there is a walk between $0$ and $A.$ In fact, let $A_1=[v_1 \quad v_2 \quad \ldots v_{n-1} \quad 0]$. Then both $A_1$ and $A-A_1$ are not invertible. Consequently, we have the following walk in $G_{R^{\s}}^c$ 
\[ 0 \to A_1 \to A. \] 
Since this is true for all $A$, we conclude that $G_{R^\s}^c$ is connected. 
\end{proof}
We are now ready to state and prove the main theorem about the primeness of $G_{R^{\s}}.$
\begin{prop} \label{prop:prime-unitary}
$G_R$ is  prime if and only if the following conditions are satisfied
\begin{enumerate}
    \item $R=R^{\s}.$
    \item Let 
    \[ R^{\s} = \prod_{i=1}^s R_i \times \prod_{i=1}^r M_{d_i}(F_i),\]
    be the decomposition of $R^{\s}$ into products of fields and matrix rings as in \cref{eq:product_rings}. 
    Then there is at most one $1 \leq i \leq s$ such that $|R_i|=2.$
    \item $R^{\s}$ is not a field ($R^{\s}$ is a field if and only if $s=1$ and $r=0$). 
\end{enumerate}
\end{prop}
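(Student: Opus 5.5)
The plan is to prove the two directions separately, leaning on the reductions already made before the statement.

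\emph{Necessity.} Suppose $G_R$ is prime. As noted above via \cite[Proposition 4.30]{chudnovsky2024prime}, $\Rad(R)$ is a homogeneous set in $G_R$. If $\Rad(R)\neq 0$, then it is a non-trivial homogeneous set unless $\Rad(R)=R$, which is impossible since $1\notin \Rad(R)$. This forces $R=R^{\s}$, which is condition (1). A prime graph on at least three vertices is connected, because a proper connected component with at least two vertices, or the complement of a single isolated vertex, would be a non-trivial homogeneous set. Since homogeneity is preserved under complementation, it is also anti-connected. Connectedness gives (2) by \cref{prop:unitary-connected}, and anti-connectedness gives (3) by \cref{prop:unitary-anti-connected}.

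Two degenerate cases have to be handled separately. One is when $R$ is a field, where $G_R$ is complete; completeness is excluded by (3). The other is when $|R|\le 2$, where primeness holds vacuously; this falls under $R$ being a field (or $\mathbb{F}_2$), so the statement should be read with that convention. Both will be remarked on briefly.

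\emph{Sufficiency.} Assume (1)--(3). By \cref{prop:unitary-connected}, $G_R$ is connected, and by \cref{prop:unitary-anti-connected}, it is anti-connected. Next, \cref{prop:unitary-connected} applied with $U=R^{\times}$ and $S=R^{\times}$ shows that $\Gamma(R,U)=G_R$ is connected. Every hypothesis of \cref{prop:homogeneous_ideal} therefore holds, so if $G_R$ were not prime there would be a proper non-zero two-sided ideal $I$ that is homogeneous in $G_R$. But $R$ is semisimple by (1), so \cref{cor:prime} rules out any such ideal. Hence $G_R$ is prime.

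The main obstacle is the necessity step showing that $\Rad(R)$ is a \emph{non-trivial} homogeneous set whenever $\Rad(R)\neq 0$. The point to check is that $|\Rad(R)|\geq 2$ and $\Rad(R)\neq R$, which is clear. Beyond that, the argument only assembles earlier results, with some care for the small or degenerate rings: when $R=\mathbb{F}_2$ the graph is prime but excluded by (3), and this needs a line of justification or a convention on trivially prime small graphs.
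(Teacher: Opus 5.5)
Your proposal is correct and follows the paper's own argument. Necessity comes from $\Rad(R)$ being homogeneous, together with the fact that a prime $G_R$ is connected and anti-connected, which then feeds into \cref{prop:unitary-connected} and \cref{prop:unitary-anti-connected}; sufficiency comes from \cref{prop:homogeneous_ideal} combined with \cref{cor:prime}. Your caveat about $R=\mathbb{F}_2$ is a genuine point the paper overlooks: $G_{\mathbb{F}_2}=K_2$ is vacuously prime but not anti-connected and violates (3), so the statement really needs $|R|\geq 3$, and your check that prime graphs on at least three vertices are connected and anti-connected is exactly the step the paper leaves implicit.
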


\begin{proof}
    We have shown that these conditions are necessary in \cref{prop:unitary-connected} and \cref{prop:unitary-anti-connected}. Let us show that they are sufficient. Condition $(1)$ implies that $G_{R}=G_{R^{\s}}$. Additionally, conditions $(2)$ and $(3)$ imply that $G_{R}$ is connected and anti-connected. By \cref{cor:prime} and \cref{prop:unitary-connected}, we conclude that $G_{R}$ is a prime graph. 
\end{proof}

\subsubsection{Primeness and singularity of the adjacency matrix}
In \cite[Section 4.3]{nguyen2025supercharacters}, we study a quite interesting relationship between the primeness of a $U$-unitary graph and the singularity of its adjacency matrix. More precisely, we show that if $R$ is commutative and $U \subset R^{\times}$ such that $\Gamma(R,U)$ is connected, anti-connected, and not prime, then the adjacency matrix of $\Gamma(R,U)$ is singular. Equivalently, $0$ is an eigenvalue of $\Gamma(R,U) $(see \cite[Proposition 4.16]{nguyen2025supercharacters}). We observe that this statement holds for any finite ring $R$. 

\begin{prop} \label{prop:0-eigenvalue}
    Suppose that $\Gamma(R,U)$ is both connected and anti-connected. Suppose further that $\Gamma(R,U)$ is not prime. Then $0$ is an eigenvalue of $\Gamma(R,U).$
\end{prop}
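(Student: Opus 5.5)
We use \cref{prop:homogeneous_ideal}, and then exhibit an explicit eigenvector for the eigenvalue $0$. Since $\Gamma(R,U)$ is connected, anti-connected and not prime, \cref{prop:homogeneous_ideal} (with $S=U$) gives a proper two-sided ideal $I$, with $I \neq 0$ and $I \neq R$, that is a homogeneous set in $\Gamma(R,U)$. By the proposition just before \cref{cor:prime}, this ideal satisfies $I + U \subset U$. Equivalently, $U$ is a union of cosets of $I$: if $u \in U$, then $u + I \subseteq U$.

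We then show that the adjacency operator kills a nonzero function. Let $A$ be the adjacency matrix, acting on functions $f\colon R \to \C$ by
\[ (Af)(x) = \sum_{u \in U} f(x+u). \]
Choose $f$ to be a nonzero function that is supported on $I$ and has total sum zero over $I$. Concretely, since $|I| \geq 2$, pick $0 \neq a \in I$ and set $f = \delta_0 - \delta_a$. Then
\[ (Af)(x) = \#\{u \in U : x+u = 0\} - \#\{u\in U : x+u = a\} = \mathbf{1}_U(-x) - \mathbf{1}_U(a-x). \]
Since $-1 \in U$, we have $\mathbf{1}_U(-x)=\mathbf{1}_U(x)$, and likewise $\mathbf{1}_U(a-x) = \mathbf{1}_U(x-a)$. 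Because $U$ is stable under translation by $I$ and $a \in I$, we get $x \in U$ if and only if $x - a \in U$. Hence $Af = 0$ with $f \neq 0$, so $0$ is an eigenvalue.

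This argument is really just the statement that two vertices $0$ and $a$ in a homogeneous set, with no edge between them (since $a \notin U$), have the same neighbourhood, so their rows of $A$ coincide. The only point needing care is that the two rows are exactly equal, including on the set $I$ itself. This holds because $I \cap U = \emptyset$: a unit in a proper ideal would force the ideal to be all of $R$. So both rows vanish on $I$, and outside $I$ they agree by homogeneity (equivalently, by $I+U\subset U$). With that checked, the main difficulty has already been absorbed by \cref{prop:homogeneous_ideal}, which supplies a two-sided ideal rather than an arbitrary homogeneous set.
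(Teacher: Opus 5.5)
Your proof is correct. It agrees with the paper's up to the last step: both apply \cref{prop:homogeneous_ideal} (with $S=U$) to obtain a nonzero proper two-sided ideal $I$ that is homogeneous and disjoint from $U$.

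The routes then diverge. The paper uses $I\cap U=\emptyset$ to identify $\Gamma(R,U)$ with the wreath product $\Gamma(R/I,U) * E_{|I|}$, and cites a spectral result for such products (Theorem 3.3 of \cite{minac2025joins}). You instead use $I+U\subset U$ to check directly that $\delta_0-\delta_a$ is a kernel vector, i.e.\ that $0$ and $a$ are non-adjacent twins.

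Your route is self-contained and makes explicit what is really needed. A homogeneous set alone would not suffice, since adjacent twins only force the eigenvalue $-1$. What works is a homogeneous subgroup disjoint from $U$, whose elements are pairwise non-adjacent and have identical neighbourhoods.

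The wreath-product route gives more global information, because it expresses the whole spectrum of $\Gamma(R,U)$ through that of $\Gamma(R/I,U)$. Your construction still recovers the multiplicity bound that comes with it. The vectors $\delta_x-\delta_{x+a}$, with $x$ running over coset representatives of $I$ and $a\in I\setminus\{0\}$, are $|R/I|(|I|-1)$ linearly independent vectors in the kernel.
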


\begin{proof}
By \cref{prop:homogeneous_ideal}, there exists a two-sided ideal $I$ such that $I$ is a homogeneous set in $\Gamma(R,U).$ Furthermore, since $I \cap U = \emptyset$, $\Gamma(R,U)$ is isomorphic to the wreath product $\Gamma(R/I, U) *  E_{|I|}.$ By \cite[Theorem 3.3]{minac2025joins}, $0$ is an eigenvalue of $\Gamma(R,S).$ 
\end{proof}
We are interested in the converse of \cref{prop:0-eigenvalue}.
\begin{question} \label{question:prime-0}
If $ \Gamma(R,U)$ is prime, is it true that $0$ is not an eigenvalue of $\Gamma(R,U)$?  
\end{question}

Here, we provide a partial answer to this question for the case of unitary Cayley graphs. 

\begin{prop}
    Suppose that $G_{R}$ is prime. Then $0$ is not an eigenvalue of $G_R.$
\end{prop}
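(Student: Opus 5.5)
By \cref{prop:prime-unitary}, if $G_R$ is prime then $R=R^{\s}$ is semisimple. In its Artin--Wedderburn decomposition $R=\prod_{i=1}^s R_i\times\prod_{j=1}^r M_{d_j}(F_j)$ at most one field factor has two elements. The plan is to use the product structure to reduce to computing the eigenvalues of each factor. The unit group of a product is the product of the unit groups, so $G_R$ is the tensor (direct) product of the graphs $G_{R_i}=K_{|R_i|}$ and $G_{M_{d_j}(F_j)}$. The eigenvalues of a tensor product of graphs are exactly the products $\lambda_1\lambda_2\cdots$ with each $\lambda$ an eigenvalue of the corresponding factor. Hence $0$ is an eigenvalue of $G_R$ if and only if $0$ is an eigenvalue of some factor, and it suffices to show that no factor has eigenvalue $0$.

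For the field factors this is immediate. The spectrum of $K_q$ is $\{q-1,-1\}$, and $q-1\ge 1$ since $q\ge 2$, so $0$ never occurs. In particular $K_2$ is harmless for the spectrum; the condition of having at most one such factor matters only for connectedness, not for nonsingularity.

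The main step is to show that $0$ is not an eigenvalue of $G_{M_n(F)}$ for $n\ge 2$ and $|F|=q$. Its eigenvalues are the character sums $\lambda_\psi=\sum_{A\in \GL_n(F)}\psi(A)$ as $\psi$ ranges over the additive characters of $M_n(F)$. Every such character has the form $\psi_B(A)=\chi(\mathrm{tr}(BA))$ for a fixed nontrivial additive character $\chi$ of $F$. The sum $\lambda_{\psi_B}$ is unchanged when $B$ is replaced by $PBQ$ with $P,Q\in\GL_n(F)$, so it depends only on $k=\rank B$. I will evaluate it as follows:
\begin{itemize}
\item Take $B=\mathrm{diag}(I_k,0)$, so that $\lambda_{\psi_B}=\sum_{A\in\GL_n(F)}\chi(a_{11}+\cdots+a_{kk})$.
\item Sum over $\GL_n(F)$ by counting invertible matrices with prescribed entries, or equivalently by inclusion--exclusion from the sum over all of $M_n(F)$.
\end{itemize}
This computation is known (Kiani and coauthors on unitary Cayley graphs of matrix rings). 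It gives
\[
\lambda_k=(-1)^k q^{\binom{k}{2}}\,q^{\,n(n-k)}\,\frac{|\GL_n(F)|}{|\GL_k(F)|\,q^{\,k(n-k)}\cdots}
\]
up to the exact normalization. The essential point is that $\lambda_k$ is $\pm$ a power of $q$ times a product of factors $q^i-1$. Each such factor is nonzero, so $\lambda_k\neq 0$ for every $0\le k\le n$.

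I expect this evaluation to be the main obstacle. The approach I will try first is to prove that $\lambda_k=\pm q^{e}\prod(q^i-1)$ by induction on $n$. Condition on the first row of $A$ and use that $\GL_n(F)$ acts transitively on nonzero vectors; the sum then reduces to a sum over $\GL_{n-1}(F)$ together with a geometric character sum over $F^n\setminus\{0\}$, and that geometric sum equals either $q^n-1$ or $-1$.

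If the exact formula proves messy, I have a fallback that needs only nonvanishing. The number of distinct eigenvalues of $G_{M_n(F)}$ is at most $n+1$, one for each rank. Their multiplicities are the numbers of matrices of rank $k$. Together with the trace identities $\sum\lambda=0$ and $\sum\lambda^2=|R|\cdot|\GL_n(F)|$, these might force nonvanishing, but this route is less reliable. With nonvanishing established for every factor, the multiplicativity of eigenvalues under tensor products completes the proof.
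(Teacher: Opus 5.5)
Your argument is essentially the paper's: reduce via \cref{prop:prime-unitary} and Artin--Wedderburn to the tensor product of the graphs $K_{|R_i|}$ and $G_{M_{d_j}(F_j)}$, note that complete graphs have no zero eigenvalue, and invoke the known spectrum of the unitary Cayley graph of a matrix ring for the remaining factors (the paper cites \cite[Theorem 1.1]{chen2022unitary}). Your displayed formula is garbled; the precise value is $\lambda_k=(-1)^k q^{\binom{k}{2}+k(n-k)}\,|\GL_{n-k}(F)|$, which is visibly nonzero, so neither your planned induction nor the trace-identity fallback is needed.
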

\begin{proof}
    By \cref{prop:prime-unitary}, if $G_R$ is prime, then $R$ is semisimple; namely 
    \[ R = \prod_{i=1}^s R_i \times \prod_{i=1}^r M_{d_i}(F_i), \]
    where $R_i, F_i$ are fields and $d_i \geq 2.$ Therefore 
    \[ G_{R} \cong \prod_{i=1}^s K_{|R_i|} \times \prod_{i=1}^r G_{M_{d_i}(F_i)}.\]
Here, all products are the direct product.    An eigenvalue of $G_{R}$ is therefore of the form $\prod_{i=1}^s \lambda_{i} \prod_{i=1}^r \lambda_{i}' $ where $\lambda_i$ is an eigenvalue of $K_{|G_i|}$ and $\lambda_i'$ is an eigenvalue of $G_{M_{d_i}(F_i)}.$ Since $K_{|G_i|}$ is a complete graph, $\lambda_i \neq 0.$ Similarly, by \cite[Theorem 1.1]{chen2022unitary}, $\lambda_i' \neq 0$ as well. Therefore, all eigenvalues of $G_{R}$ are not $0.$
\end{proof}

\section{Spectra theory of $U$-unitary Cayley graphs \\ over a finite symmetric Frobenius algebra} \label{sec:spectra}

\subsection{Symmetric Frobenius algebras}
Let $S$ be a finite commutative ring. Let $R$ be an $S$-algebra; namely, there is a ring homomorphism $f\colon S \to R$ such that $f(S)$ is contained in the center of $R.$ To avoid lengthy notations, we will often identify $S$ and its image $f(S)$ in $S.$ 

\begin{definition}
We say that $R$ is a symmetric Frobenius $S$-algebra if there exists a $S$-module morphism $\psi_{R,S}\colon R \to S$ satisfying the following conditions 
\begin{enumerate}
    \item $\psi_{R,S}(r_1r_2) =\psi_{R,S}(r_2 r_1)$. 
    \item The kernel of $\psi_{R,S}$ does not contain any non-zero left ideal in $R.$

\end{enumerate}
\end{definition}
\begin{prop} \label{prop:composition-frobenius}
Suppose that $R$ is a symmetric Frobenius $S$-algebra, and $S$ is a symmetric Frobenius $T$-algebra. Then $R$ is a symmetric Frobenius $T$-algebra. 
\end{prop}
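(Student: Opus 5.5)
The natural candidate for the Frobenius functional of $R$ over $T$ is the composite
\[ \psi_{R,T} := \psi_{S,T} \circ \psi_{R,S} \colon R \to T. \]
We first record that $R$ is a $T$-algebra. The structure maps are $g\colon T \to S$ with $g(T)$ central in $S$, and $f\colon S \to R$ with $f(S)$ central in $R$. Their composite $f\circ g\colon T \to R$ is a ring homomorphism with image inside $f(S)$, hence central in $R$.

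\textbf{Step 1: $\psi_{R,T}$ is $T$-linear.} For $t \in T$ and $r \in R$, we have $t\cdot r = f(g(t))r$. Since $\psi_{R,S}$ is $S$-linear, $\psi_{R,S}(f(g(t))r) = g(t)\psi_{R,S}(r)$. Applying $\psi_{S,T}$, which is $T$-linear, gives $t\,\psi_{S,T}(\psi_{R,S}(r))$.

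\textbf{Step 2: the trace property.} This is immediate: $\psi_{R,S}(r_1r_2)=\psi_{R,S}(r_2r_1)$, and we then apply $\psi_{S,T}$ to both sides. Note that the symmetry of $\psi_{S,T}$ is not even needed here.

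\textbf{Step 3: the kernel condition.} This is the key step. Suppose $J \subset R$ is a nonzero left ideal with $\psi_{R,T}(J)=0$; we derive a contradiction. Consider the image $\psi_{R,S}(J)\subset S$.

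Because $J$ is a left ideal and $f(S)$ lies in $R$, for $s\in S$ and $x\in J$ we have $f(s)x\in J$. Then $\psi_{R,S}(f(s)x)=s\,\psi_{R,S}(x)$, so $\psi_{R,S}(J)$ is an $S$-submodule of $S$, that is, an ideal of the commutative ring $S$.

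This ideal lies in $\ker\psi_{S,T}$, since $\psi_{S,T}(\psi_{R,S}(J))=\psi_{R,T}(J)=0$. By the Frobenius condition on $\psi_{S,T}$, the kernel contains no nonzero ideal of $S$, so $\psi_{R,S}(J)=0$. Now $J$ is a nonzero left ideal inside $\ker\psi_{R,S}$, which contradicts the Frobenius property of $\psi_{R,S}$.

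The only point needing care is exactly the observation that $\psi_{R,S}(J)$ is an ideal of $S$. It rests on $S$ acting on $R$ through left multiplication by central elements, so that left ideals of $R$ are $S$-submodules. Everything else is formal.
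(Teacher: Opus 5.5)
Your proof is correct and follows the paper's argument: you take the composite $\psi_{S,T}\circ\psi_{R,S}$, and the key step is that left multiplication by $S$ together with $S$-linearity turns the $\psi_{R,S}$-image of a left ideal killed by $\psi_{R,T}$ into an ideal of $S$ inside $\ker\psi_{S,T}$, which must therefore be zero. The paper argues elementwise with the principal left ideal $Rx$, whereas you work with the image ideal $\psi_{R,S}(J)$ directly, but this is the same argument.
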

\begin{proof}
    Let $\psi_{R,S}\colon R \to S$ (respectively $\psi_{S, T}\colon S \to T$) be the morphism that makes $R$ (respectively $S$) a symmetric Frobenius $S$-algebra (respectively $T$-algebra). Let $\psi_{R, T}$ be the composition of $\psi_{S,T} \circ \psi_{R,S}.$ By definition $\psi_{R,T}\colon R \to T$ is $T$-linear and symmetric. We claim that its kernel does not contain any non-trivial left ideal in $R.$ In fact, suppose to the contrary, it is not the case. Then, there exists $x \in R$ such that $\psi_{R,T}(rx)=0$ for all $r\in R.$ For each $s \in S$, $sr \in R$ and hence $\psi_{R,T}(srx)=0.$ By definition 
    \[ 0 = \psi_{R,T}(srx) = \psi_{S, T} (\psi_{R,S}(srx)) = \psi_{S,T}(s \psi_{R,S}(rx)) . \]
    Since $\psi_{S,T}$ does not contain a non-trivial left ideal, we conclude that $\psi_{R,S}(rx)=0.$ Since this is true for all $r \in R$, $\ker(\psi_{R,S})$ contains the left ideal generated by $x$--which is a contradiction. 
\end{proof}

\begin{prop} \label{prop:matrix-frobenius}
    Let $R$ be a finite commutative ring and $n$ a positive integer. Then $M_n(R)$ is a symmetric Frobenius $R$-algebra under the trace map.  
\end{prop}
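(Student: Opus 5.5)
The plan is to verify the two axioms in the definition of a symmetric Frobenius $S$-algebra directly, with $S=R$ and $\psi_{M_n(R),R}=\operatorname{tr}$. First I will fix the algebra structure: the map $f\colon R\to M_n(R)$, $r\mapsto rI_n$, is a ring homomorphism. Its image lies in the center of $M_n(R)$ because $R$ is commutative, so scalar matrices commute with every matrix. The trace $\operatorname{tr}\colon M_n(R)\to R$ is additive and satisfies $\operatorname{tr}(rA)=r\operatorname{tr}(A)$, so it is a morphism of $R$-modules.

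For condition (1), I will expand $\operatorname{tr}(AB)=\sum_{i,j}a_{ij}b_{ji}$ and $\operatorname{tr}(BA)=\sum_{i,j}b_{ji}a_{ij}$. These agree term by term precisely because $R$ is commutative. This is the only point where commutativity of the coefficient ring is genuinely needed beyond the algebra structure.

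For condition (2), I will argue as in the proof of \cref{prop:composition-frobenius}: it suffices to show that if $X\in M_n(R)$ satisfies $\operatorname{tr}(AX)=0$ for all $A\in M_n(R)$, then $X=0$. Indeed, if a left ideal $L$ lies in $\ker(\operatorname{tr})$ and $X\in L$, then $AX\in L$ for every $A$. I will then test against the matrix units $E_{ji}$ (with $1$ in position $(j,i)$ and zeros elsewhere). A direct computation gives $E_{ji}X$ as the matrix whose $j$-th row is the $i$-th row of $X$, so $\operatorname{tr}(E_{ji}X)=x_{ij}$. Hence every entry of $X$ vanishes, and $X=0$, so $\ker(\operatorname{tr})$ contains no non-zero left ideal.

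None of these steps presents a real obstacle; the statement is essentially the nondegeneracy of the trace pairing $(A,B)\mapsto\operatorname{tr}(AB)$. The only care needed is the index bookkeeping in $\operatorname{tr}(E_{ji}X)=x_{ij}$. Note that no hypothesis on $R$ such as being Frobenius or a field is required; finiteness is needed only to match the paper's standing setting.
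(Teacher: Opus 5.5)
Your proposal is correct and follows the same route as the paper: symmetry of the trace, then nondegeneracy by testing an element of a left ideal inside the kernel against the matrix units, via $T(E_{ji}X)=x_{ij}$ (the paper writes $T(E_{ij}A)=A_{ji}$). Your explicit checks that scalar matrices are central and that the trace is $R$-linear only fill in details the paper takes for granted.
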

\begin{proof}
    Let $T\colon M_n(R) \to R$ be the trace map. It is known that $T$ is symmetric; namely $T(AB)=T(BA)$ for all $A,B \in M_n(R).$ We now show that $T$ is non-degenerate. In fact, suppose to the contrary that $\ker(T)$ contains a non-zero left ideal in $M_n(R).$ Then, there exists a non-zero matrix $A \in M_n(R)$ such that $T(BA)=0$ for all $B \in M_n(R).$ Let $E_{ij}$ be the matrix whose $(i,j)$-position is $1$ and $0$ everywhere else. We have 
    \[ 0= T(E_{ij}A)=A_{ji}.\]
    Since this is true for all $1 \leq i,j \leq n$, we must have $A=0$, which is a contradiction. 
\end{proof}
We have the following corollary. 
\begin{cor} \label{cor:matrix-symmetric}
    Let $S, R$ be commutative rings such that $R$ is a symmetric  Frobenius $S$-algebra. Then $M_n(R)$ is also a symmetric  Frobenius $S$-algebra. In particular, if $F$ is a finite field whose base field is $\F_p$, then $M_n(F)$ is a symmetric Frobenius $\F_p$-algebra. 
\end{cor}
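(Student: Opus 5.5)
The plan is to combine \cref{prop:matrix-frobenius} with the transitivity result \cref{prop:composition-frobenius}. There is one subtlety: \cref{prop:matrix-frobenius} is stated for $M_n(R)$ over its own base ring $R$, whereas here $R$ is itself an $S$-algebra. So first I will check that $M_n(R)$ is an $S$-algebra. Via $S \to R \to M_n(R)$, with $R$ embedded as scalar matrices, the image of $S$ is central in $M_n(R)$, because $R$ is commutative and scalar matrices commute with everything. Thus $M_n(R)$ is an $S$-algebra, and the $R$-module structure restricts along $S \to R$ to the $S$-module structure.

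Next, by \cref{prop:matrix-frobenius}, the trace $T\colon M_n(R)\to R$ makes $M_n(R)$ a symmetric Frobenius $R$-algebra. By hypothesis, $R$ is a symmetric Frobenius $S$-algebra via some $\psi_{R,S}$. Applying \cref{prop:composition-frobenius} with the tower $M_n(R)\supset R\supset S$ (playing the roles of $R$, $S$, $T$ there) gives the first claim, with Frobenius form $\psi_{R,S}\circ T$.

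The one point needing care is that \cref{prop:composition-frobenius} uses that the composite is $S$-linear. This holds because $T$ is $R$-linear and hence $S$-linear. Its nondegeneracy argument also needs the left multiplication by $s\in R$ that appears there to be multiplication by central scalars, which is the case here. So the proof of \cref{prop:composition-frobenius} applies verbatim.

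For the particular case, let $F$ be a finite field with prime field $\F_p$. I take the trace form $\mathrm{Tr}_{F/\F_p}\colon F\to \F_p$. It is $\F_p$-linear, and it is symmetric because $F$ is commutative. Its kernel contains no nonzero ideal of $F$, since the only nonzero ideal is $F$ itself. Also $\mathrm{Tr}_{F/\F_p}$ is not identically zero, because $F/\F_p$ is separable; equivalently, the polynomial $x+x^p+\cdots+x^{p^{k-1}}$ has fewer than $p^k$ roots. Hence $F$ is a symmetric Frobenius $\F_p$-algebra, and the first part applied with $R=F$ and $S=\F_p$ gives the result.

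I expect no real obstacle. The only nontrivial input is the nonvanishing of the field trace, which the degree count above handles.
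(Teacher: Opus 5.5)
Your proposal is correct and follows the paper's proof: the first part comes from \cref{prop:composition-frobenius} applied to the tower $S \to R \to M_n(R)$ together with \cref{prop:matrix-frobenius}, and the second part uses the field trace $F \to \F_p$. Your extra checks fill in details the paper leaves implicit: that $S$ acts centrally on $M_n(R)$, that the composite form is $S$-linear, and that $\mathrm{Tr}_{F/\F_p}$ is nonzero by the degree count.
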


\begin{proof}
    The first part follows from  \cref{prop:composition-frobenius} and \cref{prop:matrix-frobenius}. The second part follows from the first part and the fact that $F:=\F_q$ is a symmetric Frobenius $\F_p$-algebra under the classical trace map $\text{Tr}\colon \F_q \to \F_p.$
\end{proof}

\subsection{Spectral description of $U$-unitary Cayley graphs}
Suppose that $R$ is a symmetric Frobenius $\Z/n$-algebra for some $n >1.$ Let $\psi$ be the associated $\Z/n$-linear functional $\psi\colon R \to \Z/n$.  Let $\zeta_n$ be a fixed primitive root of unity in $\C$. Let $\chi\colon R \to \C$ be the character defined by $\chi(s)=\zeta_n^{\psi(s)}.$ For each $r \in R$, let $\chi_r$ be the character defined by $\chi_r(s)=\chi(rs).$ The map 
\[ \Phi\colon R \to \Hom(R, \C^{\times}), \] 
sending $r \mapsto \chi_r$ is a group homomorphism (with respect to the additive structure on $R$). Since $\psi$ is non-degenerate, $\Phi$ is injective. However, since $R$ is finite, it is an isomorphism as well. In other words, $\chi$ is a generating character for the dual group $\Hom(R, \C^{\times})$ (see \cite{honold2001characterization}). We remark that since $\psi$ is symmetric, for each $r,s \in R$, $\chi_{r}(s)=\chi_{s}(r).$

By the circulant diagonalization theorem (see \cite{kanemitsu2013matrices}), we know that the eigenvalues of the Cayley graph $\Gamma(R,S)$ are given by the multiset $\{\lambda_r\}_{r \in R}$ where 
\[ \lambda_r = \sum_{s \in S} \chi_{r}(s).\]
Next, we will show that when $\Gamma(R,S)$ is $U$-unitary, there is a quite elegant description of the above sum by certain supercharacter theory on $R.$
To do so, we first need to recall the definition of a supercharacter theory. 
\begin{definition} (see \cite[Definition 2.1]{nguyen2025supercharacters}) \label{def:supercharacter}
    Let $G$ be a finite abelian group. Let $\mathcal{K}=\{K_1, K_2, \ldots, K_m\}$ be a partition of $G$ and $\mathcal{X} =\{X_1, X_2, \ldots, X_m \}$ a partition of the dual group $\widehat{G}=\Hom(G, \C^{\times})$ of characters of $G$. We say that $(\mathcal{K}, \mathcal{X})$ is a supercharacter theory for  $G$ if the following conditions are satisfied 
    \begin{enumerate}
        \item $\{0 \} \in \mathcal{K};$
        \item $|\mathcal{X}| = |\mathcal{K}|;$
        \item For each $X_i \in \mathcal{X}$, the character sum 
        \[ \sigma_i = \sum_{\chi \in X_i} \chi\]
        is constant on each $K \in \mathcal{K}$;
    \item For a  fixed $\chi \in \mathcal{X}$ the sum $\sum_{k \in K_i} \chi(k)$ does not depend on the choice of $\chi \in X.$ 

    \end{enumerate}
\end{definition}
We now show that each $U$ induces a supercharacter theory on $R$ (this generalizes \cite[Theorem 4.1]{nguyen2025supercharacters} for the case $R$ is commutative). More precisely

\begin{thm} \label{prop:supercharacter-U}
let $\mathcal{K}=\{K_1, K_2, \ldots, K_m\}$ be the double quotient $U \backslash R \slash U.$ Additionally, let $\mathcal{X} = \{X_1, X_2, \ldots, X_m\}$ be the partition of the character group of $R$ defined by 
    \[ X_i = \{\chi_{x} \mid x \in K_i \}.\]
Then the pair $(\mathcal{K}, \mathcal{X})$ is a symmetric supercharacter theory for $R.$ Furthermore, $(\mathcal{K}, \mathcal{X})$ satisfies Condition $4$ in Definition \ref{def:supercharacter}.\end{thm}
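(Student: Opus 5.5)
We verify the four conditions of \cref{def:supercharacter} directly, using the isomorphism $\Phi\colon r \mapsto \chi_r$ and the symmetry $\chi_r(s)=\chi_s(r)$ coming from $\psi(rs)=\psi(sr)$.

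\textbf{Conditions (1) and (2).} Since $u_10u_2=0$, the class of $0$ is $\{0\}$, so $\{0\}\in\mathcal{K}$. Because $\Phi$ is a bijection from $R$ onto $\widehat{R}$ and the $K_i$ partition $R$, the sets $X_i=\Phi(K_i)$ partition $\widehat{R}$. Hence $|\mathcal{X}|=|\mathcal{K}|=m$. We also note that $X_i$ is closed under $\chi\mapsto\overline{\chi}$: indeed $\overline{\chi_x}=\chi_{-x}$, and $-x=(-1)x\cdot 1\in K_i$ because $-1\in U$. This gives the symmetry of the theory.

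\textbf{The key invariance.} For $u_1,u_2\in U$ and $x,y\in R$,
\[ \chi_{u_1xu_2}(y)=\zeta_n^{\psi(u_1xu_2y)}=\zeta_n^{\psi(xu_2yu_1)}=\chi_x(u_2yu_1). \]
Here we use the cyclic invariance of $\psi$, which follows from $\psi(ab)=\psi(ba)$ with $a=u_1$ and $b=xu_2y$. This is the only place where the symmetric Frobenius hypothesis enters beyond the existence of $\Phi$. I expect this bookkeeping of left and right actions to be the one real obstacle: the map $y\mapsto u_2yu_1$ swaps the roles of $u_1$ and $u_2$, and it is symmetry that makes this harmless.

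\textbf{Condition (3).} Put $\sigma_i(y)=\sum_{x\in K_i}\chi_x(y)$ and take $y'=v_1yv_2$ with $v_1,v_2\in U$. Then
\[ \sigma_i(y')=\sum_{x\in K_i}\zeta_n^{\psi(xv_1yv_2)}=\sum_{x\in K_i}\zeta_n^{\psi(v_2xv_1y)}=\sum_{x'\in K_i}\chi_{x'}(y)=\sigma_i(y). \]
The last step holds because $x\mapsto v_2xv_1$ is a bijection of $K_i$ onto itself, with inverse given by multiplication by $v_2^{-1}$ and $v_1^{-1}$. So $\sigma_i$ is constant on every class $K_j$.

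\textbf{Condition (4).} For $\chi_x\in X_i$, the symmetry $\chi_x(k)=\chi_k(x)$ gives
\[ \sum_{k\in K_j}\chi_x(k)=\sum_{k\in K_j}\chi_k(x)=\sigma_j(x). \]
By condition (3), $\sigma_j(x)$ depends only on the class $K_i$ of $x$, not on the choice of $\chi_x\in X_i$. This proves condition (4).
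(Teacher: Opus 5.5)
Your proposal is correct and is essentially the paper's proof: conditions (1)--(2) follow from $\Phi$ being a bijection, and condition (3) is the same reindexing $x\mapsto v_2xv_1$ of $K_i$, justified by $\psi(ab)=\psi(ba)$. The differences are cosmetic: for condition (4) you reduce to (3) via $\chi_x(k)=\chi_k(x)$, whereas the paper only says it follows by an almost identical argument (the direct reindexing $k\mapsto u_2ku_1$ that your key invariance provides), and you also check that each $X_i$ is closed under complex conjugation using $-1\in U$, which the paper leaves implicit.
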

\begin{proof}
    The first two conditions are clear from the definition of $\mathcal{K}$ and $\mathcal{X}.$ Let us prove the third condition. Let 
    \[ \sigma_i = \sum_{x \in K_i} \sigma_x.\]
Suppose that $y \sim_{U} z$, we will show that $\sigma_i(y)=\sigma_i(z).$ In fact, by definition, $y=u_1 z u_2$ for some $u_1, u_2 \in U.$ We then have 
\begin{align*}
\sigma_i(y) &= \sum_{x \in K_i} \sigma_x(y) = \sum_{x \in K_i} \sigma_x(u_1 z u_2) = \sum_{x \in K_i} \chi(xu_1 zu_2) \\ 
&= \sum_{x \in K_i} \chi(u_2 xu_1 z) = \sum_{t \in u_2 K_i u_1} \chi(tz) = \sum_{t \in u_2 K_i u_1} \chi_t(z) = \sum_{x \in K_i} \chi_x(z) = \sigma_i(z). 
\end{align*}
Here, the third equality follows from the fact that $\chi(rs)=\chi(sr)$. The last equality follows from the fact that $u_2K_iu_1 = K_i.$ We conclude that the pair $(\mathcal{K}, \mathcal{X})$ satisfies the third condition of Definition \ref{def:supercharacter}.  Finally, the last condition of Definition \ref{def:supercharacter} can be obtained by an almost identical argument as above. 
\end{proof}

For each $1 \leq i, j \leq n$ we define, as in \cite{nguyen2025supercharacters},  the following notation 
\[ \Omega_{ji} = \sum_{k \in K_j} \chi_{x_i} (k), \]
here $x_i \in K_i$. (By \cref{prop:supercharacter-U}, $\Omega_{ji}$ does not depend on the choice of $x_i.$) 

\begin{rem}

In \cite[Proposition 4.5]{nguyen2025supercharacters}, we describe $\Omega_{ji}$ via certain generalized Ramanujan sums when $R$ is commutative. Unfortunately, it is not clear to us how to do so in the non-commutative case. In the special case of matrix rings, \cite{chen2022unitary} provides a rather elegant calculations of these sums when $R=M_n(F)$ and $U=GL_n(F).$
    
\end{rem}

We are now ready to state the main theorem about spectra of $U$-unitary Cayley graphs. 
\begin{thm} \label{prop:eigenvalues}
Let $\Gamma(G,S)$ be an $U$-unitary Cayley graph. Then,  the spectrum of $\Gamma(G,S)$ is the multiset $\{[\lambda_i]_{|K_i|} \}_{1 \leq i \leq m}$ where 
    \[\lambda_i = \sum_{K_j \subset S} \Omega_{ji}.\]
    Consequently, $\Gamma(G,S)$ has at most $m$ distinct eigenvalues where $m=|U \backslash R \slash U|.$
\end{thm}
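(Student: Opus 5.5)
We combine the circulant diagonalization with the supercharacter theory of \cref{prop:supercharacter-U}. Since $R$ is a symmetric Frobenius $\Z/n$-algebra, the map $r \mapsto \chi_r$ identifies $R$ with its character group. The circulant diagonalization theorem then says that the spectrum of $\Gamma(R,S)$ is the multiset $\{\lambda_r\}_{r\in R}$, where $\lambda_r = \sum_{s\in S}\chi_r(s)$ and each $r$ contributes one eigenvalue, with eigenvector $\chi_r$.

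First, we group the generating set. Because $\Gamma(R,S)$ is $U$-unitary, the earlier criterion shows that $S$ is a disjoint union of some orbits $K_j$. This gives
\[ \lambda_r = \sum_{K_j \subset S} \sum_{k \in K_j} \chi_r(k). \]

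Next, fix $r \in K_i$. By the definition of $\Omega_{ji}$, the inner sum equals $\Omega_{ji}$, so $\lambda_r = \sum_{K_j\subset S}\Omega_{ji}$. This is the step that needs care: we must check that $\sum_{k\in K_j}\chi_r(k)$ does not depend on the representative $r$ of $K_i$. One could cite Condition 4 of \cref{prop:supercharacter-U}, but a direct check is easy. Write $r' = u_1 r u_2$ with $u_1,u_2 \in U$. Using the symmetry $\chi(ab)=\chi(ba)$, we get
\[ \chi_{r'}(k) = \chi(u_1 r u_2 k) = \chi(r\, u_2 k u_1). \]
As $k$ runs over $K_j$, the element $u_2 k u_1$ also runs over $K_j$, because $K_j$ is stable under left and right multiplication by $U$. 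Hence the inner sum is unchanged when $r$ is replaced by $r'$.

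Finally, every $r\in K_i$ yields the same eigenvalue $\lambda_i$. Since the $K_i$ partition $R$, the multiset of eigenvalues is $\{[\lambda_i]_{|K_i|}\}_{1\le i\le m}$. In particular there are at most $m = |U\backslash R/U|$ distinct values. The only genuine subtlety is the well-definedness just checked, and it rests entirely on the symmetry of $\psi$. Without that symmetry, left and right multiplication by units could not be moved past each other inside the character.
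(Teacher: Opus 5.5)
Your proof is correct and follows the paper's argument. You decompose $S$ into the orbits $K_j$, identify each inner sum with $\Omega_{ji}$, and note that every $r\in K_i$ gives the same eigenvalue, so that eigenvalue has multiplicity $|K_i|$. The only difference is that you check directly, via $\chi(u_1ru_2k)=\chi(r\,u_2ku_1)$, that $\sum_{k\in K_j}\chi_r(k)$ does not depend on the representative $r$, whereas the paper cites \cref{prop:supercharacter-U}, whose proof is this same computation.
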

\begin{proof}
    For each $r \in R$, let $\lambda_r$ be the eigenvalue associated with $r$ under the circulant diagonalization theorem. We then have 
    \[ \lambda_{x_i} = \sum_{s \in S} \chi_{x_i}(s) = \sum_{K_j \subset S} \left(\sum_{k \in K_j} \chi_{x_i}(k)\right) = \sum_{K_j \subset S} \Omega_{ji}. \]
    We also know that if $x_i \sim_{U} x_i'$, then $\lambda_{x_i}=\lambda_{x_i'}.$ Therefore, each $\lambda_{x_i}$ occurs $|K_i|$-times. 
\end{proof}
\subsection{The case of matrix rings}
In this section, we apply \cref{prop:eigenvalues} to some $U$-unitary Cayley graphs over a matrix ring. We will show, in particular, that the upper bound in \cref{prop:eigenvalues} can be strict in several cases. First, we study the case of gcd-graphs over $M_n(F).$
\begin{cor} \label{cor:number-of-eigs-matrix}
Let $R=M_n(F)$ where $F$ is a finite field.  Let $\Gamma(R,S)$ be a $R^{\times}$-unitary Cayley graph. Then $\Gamma(R,S)$ has at most  $(n+1)$ distinct eigenvalues. 
\end{cor}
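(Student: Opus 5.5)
Observe that for $U=R^{\times}=\GL_n(F)$, the double coset space $\GL_n(F)\backslash M_n(F)/\GL_n(F)$ is governed by rank. By Gaussian elimination (row and column operations are left and right multiplication by invertible matrices), every $A\in M_n(F)$ of rank $k$ can be written as $A=PE_kQ$ with $P,Q\in\GL_n(F)$, where $E_k=I_k\oplus 0_{n-k}$. Conversely, multiplication by invertible matrices preserves rank. Hence $A\sim_{R^\times}B$ if and only if $\rank A=\rank B$.

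Since the rank takes the values $0,1,\ldots,n$, this gives exactly $n+1$ orbits $K_0,K_1,\ldots,K_n$, with $K_k$ the set of rank-$k$ matrices. So $m=|U\backslash R/U|=n+1$.

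To apply \cref{prop:eigenvalues} we need $R$ to be a symmetric Frobenius $\Z/p$-algebra, where $p=\text{char}(F)$. This is supplied by \cref{cor:matrix-symmetric}: the functional $\psi=\text{Tr}_{F/\F_p}\circ T$ is symmetric and non-degenerate, so the character machinery of the previous subsection applies with $n$ there replaced by $p$. \cref{prop:eigenvalues} then says the spectrum of $\Gamma(R,S)$ is $\{[\lambda_k]_{|K_k|}\}_{0\le k\le n}$. There are therefore at most $n+1$ distinct eigenvalues.

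The only step needing care is the rank-normal-form claim, and it is standard linear algebra. There is no real obstacle beyond checking that the hypotheses of \cref{prop:eigenvalues} are met, which the preceding corollary already does.
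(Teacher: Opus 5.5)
Your proposal is correct and follows the paper's proof. You obtain the symmetric Frobenius $\F_p$-algebra structure from \cref{cor:matrix-symmetric}, identify the $\GL_n(F)$ double cosets with the $n+1$ rank classes via row and column reduction, and then apply \cref{prop:eigenvalues}; your note that the modulus $n$ there should be read as $p=\text{char}(F)$ usefully clears up the paper's overloaded notation.
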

\begin{proof}
By \cref{cor:matrix-symmetric}, we know that $M_n(F)$ is a symmetric Frobenius $\F_p$-algebra where $\F_p$ is the base field of $F.$  By the theory of row and columns operations, for two matrices $A$ and $B$, $A \sim_{GL_n(F)} B$ if and only if $\rank(A)=\rank(B).$ Therefore, $|GL_n(F) \backslash M_n(F) \slash GL_n(F)|$ has exactly $n+1$ equivalence classes. Therefore, by \cref{prop:eigenvalues}, $\Gamma(R,S)$ has at most $n+1$ distinct eigenvalues. 
\end{proof}
\begin{rem}
    The upper bound in \cref{cor:number-of-eigs-matrix} is optimal. In fact, in \cite[Theorem 1.1]{chen2022unitary}, the authors show that the unitary Cayley graph on $M_n(F)$ has exactly $n+1$ distinct eigenvalues. It would be interesting to see whether their calculations could be generalized to all $R^{\times}$-unitary Cayley graphs over $M_n(F).$
\end{rem}
We now discuss another result which shows the power of \cref{cor:number-of-eigs-matrix}. Let $R=GL_n(F)$ and $U=SL_n(F)$--the set of all invertible matrices with determinant $1.$
\begin{prop} \label{prop:similar-sln}
    $|SL_n(F) \backslash M_n(F) \slash SL_n(F)|=n+|F|-1.$
\end{prop}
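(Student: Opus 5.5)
We need to count the orbits of the action $(g,h)\cdot A = gAh$ of $SL_n(F)\times SL_n(F)$ on $M_n(F)$. The plan is to show that the singular matrices split into exactly the same $n$ classes as under $GL_n(F)$, namely one class for each rank $0,1,\ldots,n-1$. We then show that the invertible matrices split into $|F^\times| = |F|-1$ classes, indexed by the determinant. Adding these gives $n + |F| - 1$.

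\emph{Invertible matrices.} The map $\det$ is constant on each orbit, because $\det(gAh)=\det A$ when $g,h\in SL_n(F)$. Conversely, suppose $A,B\in GL_n(F)$ satisfy $\det A=\det B$. Then $BA^{-1}\in SL_n(F)$, so $B=(BA^{-1})\,A\,I_n$ lies in the orbit of $A$. Hence the orbits contained in $GL_n(F)$ are exactly the fibers of $\det\colon GL_n(F)\to F^\times$. Since $\det$ is surjective (for instance via $\mathrm{diag}(a,1,\ldots,1)$), there are exactly $|F|-1$ such orbits.

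\emph{Singular matrices.} Rank is an invariant of the action, so it suffices to show that two singular matrices of the same rank $k<n$ lie in the same $SL_n(F)$-double coset. By \cref{cor:number-of-eigs-matrix}, any singular $A$ of rank $k$ can be written $A=gE_kh$ with $g,h\in GL_n(F)$, where $E_k=I_k\oplus 0_{n-k}$. The key step is to absorb the determinants. Set $D=\mathrm{diag}(1,\ldots,1,\det g)$, where the last diagonal entry sits at position $n>k$. Since that position lies in the zero block of $E_k$, we have $D^{-1}E_k=E_k$. Therefore $A=(gD^{-1})E_k h$ with $\det(gD^{-1})=1$. In the same way, multiplying $h$ on the left by $\mathrm{diag}(1,\ldots,1,(\det h)^{-1})$ does not change $E_k$ and corrects the determinant of $h$. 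So every rank-$k$ singular matrix lies in $SL_n(F)\,E_k\,SL_n(F)$, and there are exactly $n$ singular orbits, one for each of the ranks $0,\ldots,n-1$.

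The only step that needs real care is this determinant-absorption argument. It depends on $E_k$ having a zero row and a zero column, which is exactly why it works for singular matrices and fails for invertible ones. The rest of the argument is bookkeeping, and combining the two counts gives $n+|F|-1$. The same argument also gives the fact used earlier, namely that $A\sim_{GL_n(F)}B \iff A\sim_{SL_n(F)}B$ for singular matrices.
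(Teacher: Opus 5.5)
Your proof is correct and matches the paper's argument. The determinant separates the invertible classes. For singular matrices of rank $k<n$, you absorb $\det g$ and $\det h$ into the last column of $g$ and the last row of $h$; both are annihilated by $E_k$, which is exactly the paper's $P'$, $Q'$ construction. You also fill in two points the paper leaves implicit or misstates: the converse for invertible matrices, via $B=(BA^{-1})A$, and the correct rank range $\{0,\ldots,n-1\}$ for the singular classes.
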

\begin{proof}
    For $A,B \in M_n(F)$ such that $A \sim_{SL_n(F)} B$, then $\det(A)=\det(B).$ Therefore, if $A,B$ are invertible and $\det(A) \neq \det(B)$, then $A$ and $B$ are not equivalent. This shows that 
    \[ |SL_n(F) \backslash GL_n(F) \slash SL_n(F)|=|F|^{\times}=|F|-1.\]

    On the other hand, we claim that if $A \sim_{GL_n(F)} B$ and $A,B$ are not invertible, then $A \sim_{SL_n(F)} B.$ Let $r=\rank(A)=\rank(B).$ Then $r<n$. Let $I_r$ be the identity matrix of size $r\times r$ and $\widehat{I_r}$ be the following $n\times n$-matrix 
    \[ 
\widehat{I_r} = I_r\oplus 0 = \begin{bmatrix} I_r & 0 \\ 0 & 0 \end{bmatrix}. \]
By row and column operations, we know that there exists $P,Q \in GL_n(F)$ such that $A=P \widehat{I_r} Q.$ Let $P'$ (respectively $Q'$) be the new matrix obtained by multiplying the last column of $P$ (respectively the last row of $Q$) by $\dfrac{1}{\det(P)}$ (respectively $\dfrac{1}{\det(Q)}$). Then, we still have 
\[ A = P\widehat{I_r} Q = P' \widehat{I_r} Q'.\]
Furthermore, $\det(P')=\det(Q')=1.$ Therefore, $A \sim_{SL_n(F)} \widehat{I_r}.$ Similarly, $B \sim_{SL_n(F)} \widehat{I_r}$, and hence $A \sim_{SL_n(F)} B.$ This shows that the rank map gives an isomorphism between $SL_n(F) \backslash [M_n(F)-GL_n(F)] \slash SL_n(F)$ and the set $\{0, 1, \ldots, n\}.$ In summary, we have  $|SL_n(F) \backslash M_n(F) \slash SL_n(F)|=n+|F|-1.$

\end{proof}

\begin{cor} \label{cor:upper-bound-sln}
    Let $\Gamma(M_n(F), S)$ be an $SL_n(F)$-unitary Cayley graph. Then $\Gamma(M_n(F), S)$ has at most $n+|F|-1$ distinct eigenvalues. 
\end{cor}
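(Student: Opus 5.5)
The corollary follows by combining \cref{prop:eigenvalues} with \cref{prop:similar-sln}, in the same way that \cref{cor:number-of-eigs-matrix} was obtained for $U=GL_n(F)$.

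The first step is to check that the hypotheses of \cref{prop:eigenvalues} hold. By \cref{cor:matrix-symmetric}, $M_n(F)$ is a symmetric Frobenius $\F_p$-algebra, where $\F_p$ is the prime field of $F$. Viewing $\F_p = \Z/p$, this gives a symmetric non-degenerate $\Z/p$-linear functional $\psi = \mathrm{Tr}_{F/\F_p}\circ T$. The associated character $\chi(s)=\zeta_p^{\psi(s)}$ is then a generating character satisfying $\chi(rs)=\chi(sr)$, which is exactly the setting of \cref{prop:supercharacter-U} and \cref{prop:eigenvalues}. We also need $U = SL_n(F)$ to be a subgroup of $R^\times$ containing $-1$. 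This is implicit in the hypothesis that the graph is an $SL_n(F)$-unitary Cayley graph, as noted in the remark preceding the connectedness result for $SL_n(F)$.

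With the hypotheses in place, \cref{prop:eigenvalues} says that the spectrum of $\Gamma(M_n(F),S)$ consists of the values $\lambda_i = \sum_{K_j\subset S}\Omega_{ji}$, one for each double coset $K_i \in SL_n(F)\backslash M_n(F)/SL_n(F)$. So the number of distinct eigenvalues is at most the number of these double cosets. By \cref{prop:similar-sln}, that number is $n+|F|-1$. The count splits into two parts:
\begin{itemize}
\item $|F|-1$ classes of invertible matrices, one for each value of the determinant;
\item $n$ classes of singular matrices, one for each rank $0,1,\ldots,n-1$.
\end{itemize}
This gives the bound.

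There is essentially no obstacle here, since the substantive work was done in \cref{prop:similar-sln} and \cref{prop:eigenvalues}. The only delicate point is the bookkeeping in \cref{prop:similar-sln}. Its proof concludes with a phrase about ranks $\{0,\ldots,n\}$, but only the ranks $0,\ldots,n-1$ correspond to singular classes. So I would state the count explicitly as $(|F|-1)+n$ to make sure the bound $n+|F|-1$ is correct.
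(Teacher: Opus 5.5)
Your proposal is correct and follows the paper's route exactly: the paper gives no separate proof, since the corollary is an immediate consequence of \cref{prop:eigenvalues} (applicable via \cref{cor:matrix-symmetric}) together with the double-coset count $n+|F|-1$ from \cref{prop:similar-sln}, just as \cref{cor:number-of-eigs-matrix} is obtained for $GL_n(F)$. Your bookkeeping remark is also right: the singular classes correspond to ranks $0,\ldots,n-1$, not to $\{0,\ldots,n\}$ as that proof states, and it is this that gives the total $(|F|-1)+n$.
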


\begin{rem}
    We remark that the upper bound in \cref{cor:upper-bound-sln} is also optimal in various cases. For example, for $n=2$, $\F_p$, $R=M_2(\F_p)$ and $U=SL_2(\F_p)$ with $p \in \{5, 7\}$, the number of distinct eigenvalues in $\Gamma(R,U)$ is exactly $p+1=n+|\F_p|-1.$ However, this upper bound is not optimal for $p=2, 3.$
\end{rem}
Let us now study the $U$-unitary Cayley graphs over the matrix ring $M_n(R)$ where $R$ is a local ring. In general, the theory of matrix rings over $R$ is quite complicated. However, when $R$ is a principal local ideal ring (PIR), there is a quite elegant theory that classifies equivalent classes of matrices over $M_n(R).$ First, we remark that a finite local PIR is necessarily a symmetric Frobenius ring (see \cite{honold2001characterization}).  By \cref{prop:matrix-frobenius}, $M_n(R)$ is also a symmetric Frobenius ring.  As a result, \cref{prop:eigenvalues} applies to all $U$-unitary Cayley graphs over $M_n(R)$. To describe the double coset $GL_n(R) \backslash M_n(R) \slash M_n(R)$, we first discuss some terminology and notation. Let $\pi$ be the uniformizer for the maximal ideal of $R.$ Then, there exists a positive integer $m$ such that $\pi^{m-1} \neq 0$ but $\pi^{m}=0.$ Furthermore, every elements in $R$ can be written in the form $u\pi^a$ where $u \in R^{\times}$ and $0 \leq a \leq m.$

\begin{prop}(\cite[Theorem 15.24]{brown1993matrices}])
Let R be a local PIR with $\pi$ as a uniformizer. Then every $A \in M_n(R)$ is $GL_n(R)$-equivalent to a unique diagonal matrix $D$ of the form 
\[ D ={\rm diag}(p^{a_1}, \ldots , p^{a_n}) ,\]
where $0 \leq a_1 \leq a_2\leq  \cdots \leq a_n \leq m$ ($D$ is called \textit{the} Smith normal form of $A$). Consequently, the number of classes in $GL_n(R) \backslash M_n(R) \slash GL_n(R)$ is precisely ${m+n \choose n}.$
    
\end{prop}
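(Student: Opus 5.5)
The plan is to prove the Smith normal form statement by induction on $n$, using elementary row and column operations (all realized by matrices in $GL_n(R)$), and then to count the resulting exponent sequences.

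\textbf{Existence.} Given $A \neq 0$, every entry has the form $u\pi^{a}$ with $u \in R^{\times}$. Choose an entry $A_{ij}=u\pi^{a}$ with $a$ minimal among all entries. Because $R$ is local, the ideals $(\pi^a)$ form a chain, so $\pi^{a}$ divides every entry of $A$. Permute rows and columns to move this entry to position $(1,1)$, and multiply the first row by $u^{-1}$ so that the $(1,1)$-entry is exactly $\pi^{a}$. Each entry in the first row and first column is a multiple $c\pi^{a}$, so it can be cleared by subtracting $c$ times the first row (respectively column). This gives $A \sim \pi^{a}\oplus A'$, where every entry of $A'$ still lies in $(\pi^{a})$, since the operations only added multiples of $\pi^a$. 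Applying induction to $A'$ produces exponents $a_2\le\cdots\le a_n$, each at least $a=a_1$, because the minimal valuation over entries can only grow under the operations. Zero entries are written as $\pi^{m}$, which is why $a_i\le m$ is allowed.

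\textbf{Uniqueness.} This is the main obstacle, and I would handle it with determinantal ideals. Let $\Delta_k(A)$ denote the ideal generated by the $k\times k$ minors of $A$. By Cauchy--Binet, each $k\times k$ minor of $PAQ$ is an $R$-linear combination of $k\times k$ minors of $A$. Hence $\Delta_k(PAQ)\subseteq\Delta_k(A)$, and by symmetry equality holds for $P,Q\in GL_n(R)$. For the diagonal matrix $D$ we have $\Delta_k(D)=(\pi^{a_1+\cdots+a_k})$, where the exponent is capped at $m$, since $\pi^{m}=0$. The difficulty is that truncation at $m$ means successive quotients of these ideals do not obviously recover each $a_k$.

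To get around this, I would instead use an invariant that sees each exponent separately: the isomorphism type of the cokernel module $R^n/AR^n\cong\bigoplus_k R/(\pi^{a_k})$. Over a chain ring, the lengths $\ell(\pi^{j}M)$ of this module determine the multiset $\{a_k\}$. Indeed, $\ell(\pi^{j}M)=\sum_k \max(a_k-j,0)$, and the second differences of this function in $j$ count how many $a_k$ equal each value. Since equivalent matrices have isomorphic cokernels, the sequence $(a_k)$ is an invariant of the double coset.

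\textbf{Counting.} The classes are in bijection with nondecreasing sequences $0\le a_1\le\cdots\le a_n\le m$, that is, multisets of size $n$ drawn from $m+1$ values. There are $\binom{m+n}{n}$ of these.
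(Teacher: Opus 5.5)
Your argument is correct. The paper gives no proof of this statement: it cites Brown's Theorem 15.24 for the normal form and treats the count $\binom{m+n}{n}$ as immediate, which matches your final multiset count. So what you supply is a self-contained proof of the cited result.

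\textbf{Existence.} This is the standard pivot-on-minimal-exponent reduction. The uniqueness computation implicitly uses two facts. First, $\pi^i\notin(\pi^{i+1})$ for $i<m$: if $\pi^i=r\pi^{i+1}$, then $\pi^i(1-r\pi)=0$ with $1-r\pi$ a unit. Second, consequently each $(\pi^i)/(\pi^{i+1})$ is one-dimensional over $R/(\pi)$ and $\ell(R/(\pi^a))=a$.

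\textbf{Uniqueness.} You were right to abandon determinantal ideals. Over $\mathbb{Z}/4$, the matrices $\mathrm{diag}(2,2)$ and $\mathrm{diag}(2,0)$ both have $\Delta_1=(2)$ and $\Delta_2=0$. Yet they are inequivalent, since their cokernels have orders $4$ and $8$. So Fitting-type invariants cannot give uniqueness over a ring with zero divisors. The cokernel invariant does work: the first differences of $j\mapsto\ell(\pi^jM)$ equal $\#\{k: a_k>j\}$, and together with $n$ these recover the multiset of exponents. Since $R$ is finite, you could equally use $|\pi^jM|=q^{\ell(\pi^jM)}$ with $q=|R/(\pi)|$, which avoids any appeal to Jordan--H\"older.

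\textbf{Notation.} The $p^{a_i}$ in the statement is a typo for $\pi^{a_i}$, which you silently corrected.
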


\begin{cor} \label{cor:number-eigs-local-rings}
Let R be a local PIR and $\Gamma(M_n(R),S)$ be a gcd-graph over $M_n(R).$ Then $\Gamma(R,S)$ has at most ${m+n \choose n}$ distinct eigenvalues. 
\end{cor}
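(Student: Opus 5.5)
The plan is to run the same argument as in \cref{cor:number-of-eigs-matrix}: check the hypotheses of \cref{prop:eigenvalues}, then count the double cosets.

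\textbf{Step 1: the ring is a symmetric Frobenius $\Z/N$-algebra.} Since $R$ is a finite local PIR, it is a symmetric Frobenius ring by \cite{honold2001characterization}. Concretely, I take a generating character $\chi_0$ of $R$. Its values are $N$-th roots of unity, where $N$ is the characteristic of $R$. This gives a $\Z/N$-linear functional $\psi_0\colon R \to \Z/N$ whose kernel contains no nonzero ideal.

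Since $R$ is commutative, symmetry of $\psi_0$ is automatic. So $R$ is a symmetric Frobenius $\Z/N$-algebra.

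By \cref{prop:matrix-frobenius}, $M_n(R)$ is a symmetric Frobenius $R$-algebra via the trace. By \cref{prop:composition-frobenius}, $M_n(R)$ is then a symmetric Frobenius $\Z/N$-algebra with functional $\psi_0 \circ T$.

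The only point needing care is that \cref{prop:composition-frobenius} is stated for an algebra over a commutative base. Here $R$ is commutative and $\Z/N$ sits in the center of $M_n(R)$, so the proposition applies verbatim.

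\textbf{Step 2: apply the spectral description.} \cref{prop:eigenvalues} applies with $U = GL_n(R) = M_n(R)^{\times}$. It says that every gcd-graph $\Gamma(M_n(R),S)$ has at most $|GL_n(R) \backslash M_n(R) \slash GL_n(R)|$ distinct eigenvalues.

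\textbf{Step 3: count the double cosets.} By the preceding proposition (the Smith normal form, \cite[Theorem 15.24]{brown1993matrices}), each double coset contains exactly one diagonal matrix ${\rm diag}(\pi^{a_1}, \ldots, \pi^{a_n})$ with $0 \le a_1 \le \cdots \le a_n \le m$. Such a matrix is determined by a nondecreasing sequence of length $n$ drawn from the $m+1$ values $\{0,1,\ldots,m\}$. The number of such sequences, i.e.\ multisets of size $n$ from $m+1$ values, is $\binom{m+n}{n}$. This gives the bound.

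\textbf{Main obstacle.} The only delicate point is Step 1: producing a $\Z/N$-valued symmetric non-degenerate functional on $R$ from the Frobenius property. I would handle it by passing from the generating character to the functional as described. Everything else is a direct citation of earlier results.
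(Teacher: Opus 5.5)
Your proof is correct and follows essentially the paper's route. The paper also uses the Frobenius property of the local PIR together with \cref{prop:matrix-frobenius} to make \cref{prop:eigenvalues} apply with $U=GL_n(R)$, then counts the Smith normal forms as $\binom{m+n}{n}$. Your explicit passage through \cref{prop:composition-frobenius} to obtain a $\Z/N$-valued functional fills in a step the paper leaves implicit.
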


\begin{rem}
    The upper bound in \cref{cor:number-eigs-local-rings} could be optimal. For example, let $n=2$ and $R=\Z/4$ (so $m=2$ in this case). Let $S$ be set of all $A \in M_n(R)$ such that or $A$ is $GL_n(R)$-equivalent to one of the following matrices 
    \[ X_1 = \begin{bmatrix}
        1 & 0 \\  0 & 0 
    \end{bmatrix}, X_2 =\begin{bmatrix}
        2 & 0 \\  0 & 0 
    \end{bmatrix}.  \]
Then, the gcd-graph $\Gamma(R,S)$ has the following characteristic polynomial 
\[ (x - 81)  (x + 15)^6 (x - 17)^9 (x - 9)^{72}  (x + 7)^{72}  (x + 3)^{96} . \]
It has exactly $6 = {m+n \choose m} = {4 \choose 2} $ distinct eigenvalues. 
    
\end{rem}

\section{Perfect state transfer on $U$-unitary Cayley graphs}
Let $G$ be an undirected simple graph with adjacency matrix $A_G$. Let $F(t)$ be the continuous-time quantum walk associated with $G$; namely, $F(t) = \exp(\mathrm{i}A_G t)$. We say that there is perfect state transfer (PST) in graph $G$ if there are distinct vertices $a$ and $b$ and a positive real number $t$ such that $|F(t)_{ab}| = 1$. The concept of PST was introduced in \cite{christandl2004perfect} in the context of quantum spin networks. Since this pioneering work, there has been a series of articles studying this phenomenon on arithmetic graphs (see \cite{bavsic2009perfect, cheung2011perfect, godsil2012state, saxena2007parameters} for some works in this line of research). One of the main reasons to focus on arithmetic graphs is that a regular graph that has PST must be necessarily integral; namely, all of its eigenvalues are integers (see \cite{godsil2012state, saxena2007parameters}). So shows in \cite{so2006integral} that a circulant graph is integral if and only if it is a gcd-graph. Building upon the work of \cite{bavsic2009perfect, bavsic2013characterization, saxena2007parameters}, we show in \cite[Theorem 3.5]{nguyen_pst} that if PST exists, it will only happen at vertices with some strict local conditions. More precisely, in \cite{nguyen_pst} for a finite commutative Frobenius ring, we describe the necessary and sufficient conditions for a $U$-unitary Cayley graph to have PST (see \cite[Theorem 2.2]{nguyen_pst}).

In this section, we generalize these results to the non-commutative setting. In particular, we will show that there is no PST on gcd Cayley graphs over $M_n(F)$ where $n > 1$ and $F$ is a finite field.

First, we show that \cite[Theorem 2.2]{nguyen_pst} generalizes without much modification to the non-commutative setting (the underlying reason is that our argument mostly uses the additive structure of $R$). Let $R$ be a symmetric Frobenius $\mathbb{Z}/n$-algebra. Let $\psi: R \to \mathbb{Z}/n$ be the non-degenerate functional of $R$, and let $\chi$ be the associated generating character of $\Hom(R, \mathbb{C}^{\times})$. Let $G = \Gamma(R,S)$ be a \textit{generic} Cayley graph over $R$. The adjacency matrix $A_G$ is an $R$-circulant matrix (see \cite{kanemitsu2013matrices, chebolu2022joins}). For each $r \in R$, let 
\[ \vec{v}_r = \frac{1}{\sqrt{|R|}}[\chi_r(s)]_{s \in R}^{T} \in \mathbb{C}^{|R|} .\] 
By the circulant diagonalization theorem (see \cite{kanemitsu2013matrices}), the set $\{\vec{v}_r : r \in R\}$ forms a normalized orthonormal eigenbasis for all $A_G$. Furthermore,  the eigenvalues of $A_G$ are precisely the multiset $\{\lambda_r \}_{r \in R}$ where $\lambda_r = \sum_{s \in S} \chi_r(s).$ 
Let $V = [\vec{v}_r]_{r \in R} \in \C^{|R| \times |R|}$ be the matrix formed by this eigenbasis and $V^{*}$ be the conjugate transpose of $V$. Then we can write
\[ A_G = V D V^*=\sum_{r \in R} \lambda_r \vec{v}_r\vec{v}_r^{*},\]
here $D =\text{diag}([\lambda_r]_{r \in R})$ is the diagonal matrix formed by the eigenvalues $\lambda_r.$ We then have
\[ F(t)= \sum_{r \in R} e^{\mathrm{i} \lambda_r t} \vec{v}_r \vec{v}_r^{*}. \]
Hence
\[ F(t)_{s_1, s_2} =  \frac{1}{|R|} \sum_{r \in R} e^{\mathrm{i} \lambda_r t} \chi_r(s_1-s_2)= \frac{1}{|R|} \sum_{r \in R} e^{\mathrm{i} \lambda_r t} \zeta_n^{\psi(r(s_1-s_2))} = \frac{1}{|R|} \sum_{r \in R} e^{2 \pi \mathrm{i} \left(\lambda_r \frac{t}{2 \pi}+\frac{ \psi(r(s_1-s_2))}{n}\right)}. \]
By the triangle inequality, $|F(t)_{s_1, s_2}|=1$ if and only if $\lambda_r \frac{t}{2 \pi} + \frac{\psi(r(s_1-s_2))}{n}$ are constant modulo $1.$ Furthermore, by symmetry, there exists perfect state transfer between $s_1$ and $s_2$ if and only if there exists perfect state transfer between $0$ and $s_2-s_1.$ We then have the following criterion, which is a direct generalization of \cite[Theorem 4]{bavsic2009perfect} and \cite[Theorem 2.2]{nguyen_pst}.
\begin{thm} \label{thm:condition-pst}
    There exists perfect state transfer from $0$ to $s$ at time $t$ if and only if for all $r_1, r_2 \in R$
    \[ (\lambda_{r_1}-\lambda_{r_2}) \frac{t}{2 \pi} + \frac{\psi((r_1-r_2)s)}{n} \equiv 0 \pmod{1}.\]
\end{thm}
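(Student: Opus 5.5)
The plan is to start from the explicit formula for the quantum walk matrix derived just before the statement. Taking $s_1 = 0$ and $s_2 = s$, that formula gives
\[ F(t)_{0,s} = \frac{1}{|R|}\sum_{r\in R} e^{2\pi \mathrm{i}\,\theta_r}, \qquad \theta_r := \lambda_r \frac{t}{2\pi} + \frac{\psi(r(0-s))}{n}. \]
The sign in the second term does not matter. Because $\psi$ is $\Z/n$-linear, $\psi(-rs)=-\psi(rs)$, so we may replace $-s$ by $s$ or absorb the sign into $r$; the condition in the statement is symmetric under this change. I will therefore write $\theta_r = \lambda_r t/2\pi \pm \psi(rs)/n$ and track the sign only once.

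The key step is the equality case of the triangle inequality. We have $|F(t)_{0,s}| \le \frac{1}{|R|}\sum_r |e^{2\pi \mathrm{i}\theta_r}| = 1$. Equality holds exactly when all the unit complex numbers $e^{2\pi \mathrm{i}\theta_r}$ are equal, that is, when $\theta_{r_1}\equiv\theta_{r_2} \pmod 1$ for all $r_1,r_2\in R$. Expanding $\theta_{r_1}-\theta_{r_2}$ and using additivity of $\psi$ and right distributivity, $\psi(r_1 s)-\psi(r_2 s)=\psi((r_1-r_2)s)$, gives precisely the stated congruence. Both directions follow from this equality case.

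One point of care is the ordering of the product. The formula uses $\chi_r(s_1-s_2)=\chi(r(s_1-s_2))$, so the element that appears is $(r_1-r_2)s$ with $r$ on the left, which matches the statement. Because $\psi$ is symmetric, one could equally write $s(r_1-r_2)$. There is also a convention issue about whether "PST from $0$ to $s$" refers to the entry $F(t)_{0,s}$ or $F(t)_{s,0}$. This is harmless: $A_G$ is symmetric, so $F(t)$ is symmetric and these entries are equal; alternatively, the $\pm$ sign is absorbed as explained above. I expect no real obstacle. The only delicate points are this sign and ordering bookkeeping and making sure that $\psi(\cdot)/n$ is read as a well-defined real number modulo $1$ (taking any integer lift of an element of $\Z/n$). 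Once those are fixed, the argument reduces to the equality case of the triangle inequality.
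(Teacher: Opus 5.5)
Your proposal is correct and is essentially the paper's argument: specialize the explicit formula for $F(t)_{s_1,s_2}$ to the pair $0,s$, then use the equality case of the triangle inequality to get that the phases $\lambda_r t/2\pi + \psi(rs)/n$ must be constant modulo $1$. On the sign bookkeeping, which the paper leaves implicit, the symmetry $F(t)_{0,s}=F(t)_{s,0}$ is the clean way to obtain the stated $+\psi((r_1-r_2)s)/n$. Your other option, absorbing the sign into $r$, also needs $\lambda_{-r}=\lambda_r$; this holds because $S=-S$, and you should say so if you use that route.
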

We will now apply \cref{thm:condition-pst} to some classes of $U$-unitary Cayley graphs. First, we discuss some necessary conditions. Let $\Gamma(R,S)$ be a $U$-unitary Cayley graph that has PST. Let $\Delta := \Delta_{S}$ be the abelian group generated by $r_1 - r_2$, where $r_1$ and $r_2$ are elements of $R$ such that $\lambda_{r_1} = \lambda_{r_2}$. By \cref{thm:condition-pst}, we must have $\psi(ds) = 0$ for all $d \in \Delta$. In particular, if $\Delta = R$, then by the non-degeneracy of $\psi$, $s$ must be $0$, and hence PST cannot exist on $\Gamma(R,S)$. This is the case when $R = M_n(F)$.

\begin{prop} \label{prop:pst-matrix-gl-n}
    Let $R=M_n(F)$ where $n>1$ and $F$ is a finite field. Let $U=R^{\times}= GL_n(F)$. Suppose that $\Gamma(R,S)$ is a $U$-unitary Cayley graph. Then, there is no PST on $\Gamma(R,S).$
\end{prop}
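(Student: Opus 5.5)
We follow the strategy outlined just before the statement: show that the group $\Delta=\Delta_S$ equals all of $R=M_n(F)$, and then use non-degeneracy of $\psi$ to force $s=0$.

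\textbf{Step 1: $M_n(F)$ is admissible for \cref{thm:condition-pst}.} By \cref{cor:matrix-symmetric}, $M_n(F)$ is a symmetric Frobenius $\F_p$-algebra, where $p$ is the characteristic of $F$. Hence \cref{thm:condition-pst} applies with $n$ replaced by $p$ and $\psi=\mathrm{Tr}_{F/\F_p}\circ T$. By \cref{prop:eigenvalues}, $\lambda_r$ depends only on the class of $r$ in $GL_n(F)\backslash M_n(F)/GL_n(F)$, that is, only on $\rank(r)$. Therefore $\Delta$ contains every difference $r_1-r_2$ with $\rank(r_1)=\rank(r_2)$.

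\textbf{Step 2: $\Delta=R$.} It suffices to show that each elementary matrix $aE_{ij}$, with $a\in F$, lies in $\Delta$, since these additively generate $R$.
\begin{itemize}
\item For $i\neq j$, take $r_1=I_n+aE_{ij}$ and $r_2=I_n$. Both are invertible, so both have rank $n$, and $r_1-r_2=aE_{ij}$.
\item For $i=j$, use $n>1$ and pick $k\neq i$. Take $r_1=E_{kk}+aE_{ii}$ and $r_2=E_{kk}$. If $a\neq 0$ these have ranks $2$ and $1$, which do not match. Instead compare $aE_{ii}$ with the rank-one matrix $E_{kk}$ when $a\neq0$: both have rank $1$, so $aE_{ii}-E_{kk}\in\Delta$. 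Taking $a=1$ gives $E_{ii}-E_{kk}\in\Delta$.
\end{itemize}
It remains to produce a single diagonal element of $\Delta$ that is not in the span of such differences. The simplest choice is $r_1=I_n$ and $r_2=\mathrm{diag}(c,1,\dots,1)$ with $c\neq 0,1$, giving $(1-c)E_{11}\in\Delta$. This requires $|F|>2$.

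For $F=\F_2$ a different route is needed, since a quick parity count suggests every same-rank difference might have trace $0$. I would instead use the rank-one matrices $E_{11}$ and $E_{11}+E_{12}$ (rows $(1,0)$ and $(1,1)$), or the rank-$2$ pair $I_2$ and $\begin{bmatrix}1&1\\1&0\end{bmatrix}$. The latter gives $E_{12}+E_{21}+E_{22}\in\Delta$. Since $E_{12},E_{21}\in\Delta$ by the off-diagonal case, this yields $E_{22}\in\Delta$, and then every $E_{ii}\in\Delta$ by the differences $E_{ii}-E_{kk}$. The same pair, padded by $\oplus I_{n-2}$, works for every $n\geq 2$ and every $F$, so I will use it uniformly. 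Combined with the differences $aE_{ii}-bE_{kk}$ between rank-one matrices, and the fact that $\Delta$ is closed under addition, this shows $\Delta$ contains all of $R$.

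\textbf{Step 3: conclusion.} By \cref{thm:condition-pst}, PST from $0$ to $s$ forces $\psi(ds)=0$ for all $d\in\Delta=R$. Then $\ker\psi$ contains the left ideal $Rs$, so $s=0$ by non-degeneracy, which contradicts $s\neq 0$. Translation invariance handles arbitrary pairs of vertices.

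The main obstacle is Step 2 for the diagonal entries over small fields such as $\F_2$. There the obvious same-rank diagonal differences fail, and one must use the non-diagonal same-rank pair above to recover a single $E_{ii}$.
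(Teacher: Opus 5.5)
Your proof is correct. It shares the paper's skeleton (show $\Delta=M_n(F)$, then apply non-degeneracy of $\psi$ to the left ideal $Rs$), but you establish $\Delta=M_n(F)$ by a different route.

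The paper uses only the invertible class. All invertible matrices share one eigenvalue, so $\Delta$ contains every difference $u_1-u_2$ of invertible matrices. It then invokes the external theorem that for $n>1$ every matrix over a field is such a difference.

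You instead verify generation by hand, using two rank classes:
\begin{enumerate}
\item the invertible pairs $I_n+aE_{ij}$, $I_n$ give every off-diagonal $aE_{ij}$;
\item the padded invertible pair $\begin{bmatrix}1&1\\1&0\end{bmatrix}\oplus I_{n-2}$, $I_n$ gives $E_{12}+E_{21}-E_{22}$, hence $E_{22}$;
\item rank-one differences $aE_{ii}-bE_{kk}$ then give every $aE_{ii}$.
\end{enumerate}
Your argument is self-contained and avoids the citation, at the cost of a short case analysis. The paper's argument is a one-liner, but it depends on a nontrivial outside result. In exchange it shows the stronger fact that the invertible class alone generates $M_n(F)$.

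Your rank-one step is close to the paper's own argument for \cref{prop:pst-matrix-sl-n}. There, differences of rank-one matrices alone already generate $M_n(F)$: write a column $v=v_1-v_2$ with $v_1,v_2\neq 0$, which is possible since $|F^n|\geq 4$. Because all rank-one matrices are also $GL_n(F)$-equivalent, that would give you a shorter uniform proof with no special treatment of $\F_2$.

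Two cosmetic points:
\begin{enumerate}
\item The padded difference is $E_{12}+E_{21}-E_{22}$, up to overall sign. Your ``$+E_{22}$'' is correct only in characteristic $2$. This is harmless, since $\Delta$ is a group containing $\pm E_{12}$ and $\pm E_{21}$.
\item The heuristic that every same-rank difference over $\F_2$ has trace $0$ is false; your own pair has trace $1$. Nothing in the argument depends on it, so you should simply drop that remark.
\end{enumerate}
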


\begin{proof}
    By \cref{prop:supercharacter-U}, if $u_1, u_2 \in GL_n(F)$, then $\lambda_{u_1} =\lambda_{u_2}.$ Consequently, $u_1-u_2 \in \Delta.$ Furthermore, by \cite{invertible_matrices}, every matrix in $M_n(F)$ is the difference of two invertible matrices. Therefore, $\Delta=M_n(F).$ This shows that there is no PST on $\Gamma(R,S).$
\end{proof}
\begin{rem}
    The same argument works for a ring of the form $M_n(S)$ where $S$ is a finite Frobenius local commutative ring (and more generally, for rings which are a direct product of these $M_n(S)$.) In fact, let $F$ be the residue field of $R$.
 Then, there is a natural ring homomorphism $M_n(S) \to M_n(F).$ Under this map, a matrix $A \in M_n(S)$ is invertible if and only if its image $\bar{A} \in M_n(F)$ is also invertible. As a result, every matrix in $M_n(R)$ is also a sum of two invertible matrices. 
 \end{rem}

We now show that the same statement holds if we take $U=SL_n(F).$
 
\begin{prop} \label{prop:pst-matrix-sl-n}
    Let $R=M_n(F)$ where $n>1$ and $F$ is a finite field. Let $U=SL_n(F)$. Suppose that $\Gamma(R,S)$ is a $U$-unitary Cayley graph. Then, there is no PST on $\Gamma(R,S).$
\end{prop}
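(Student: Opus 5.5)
We follow the same strategy as in \cref{prop:pst-matrix-gl-n}: we show that the group $\Delta=\Delta_S$, generated by differences $r_1-r_2$ with $\lambda_{r_1}=\lambda_{r_2}$, is all of $M_n(F)$. By \cref{thm:condition-pst}, PST from $0$ to $s$ forces $\psi(ds)=0$ for all $d\in\Delta$. If $\Delta=M_n(F)$, then $\psi(M_n(F)s)=0$, so $\ker\psi$ contains the left ideal $M_n(F)s$. The non-degeneracy of $\psi$ (\cref{cor:matrix-symmetric}) then gives $s=0$, so there is no PST.

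To show $\Delta=M_n(F)$, we use \cref{prop:supercharacter-U} with $U=SL_n(F)$: the map $r\mapsto\lambda_r$ is constant on each double coset $SL_n(F)\,A\,SL_n(F)$. We know two kinds of such cosets from \cref{prop:similar-sln}. First, $SL_n(F)$ itself is a single class, so $u_1-u_2\in\Delta$ for all $u_1,u_2\in SL_n(F)$. Second, for each $r<n$, all singular matrices of rank $r$ form a single class.

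The most direct route uses the rank-one class. Any two rank-one matrices $E,E'$ satisfy $\lambda_E=\lambda_{E'}$, so $E-E'\in\Delta$. Take $E=E_{ij}$ and $E'=E_{ij}+E_{kl}$. For $n\ge 2$ we can choose $(k,l)$ with $k\neq i$ and $l=j$, so that $E'$ has rank one (both nonzero entries lie in column $j$). Then $E_{kj}=E'-E\in\Delta$. Since every $E_{kj}$ arises this way, and replacing $E_{kj}$ by $aE_{kj}$ with $a\in F^\times$ works identically, every matrix $aE_{kj}$ lies in $\Delta$. As these span $M_n(F)$ additively, $\Delta=M_n(F)$.

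An alternative route mirrors the proof of the connectedness of $\Gamma(M_n(F),SL_n(F))$. That proof writes $a\oplus 0_{n-1}$ as $X'-Y'$ with $X',Y'\in SL_n(F)$ (using $\det(-I_n)=1$), which also gives $\Delta=M_n(F)$ through the $SL_n(F)$ class. The rank-one argument avoids the assumption on $\det(-I_n)$, though that assumption is implicit anyway since $-1\in U$ by definition.

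The main step to check is the rank-one case: that every rank-one matrix, over every field, is $SL_n(F)$-equivalent to $\widehat{I_1}$. The proof of \cref{prop:similar-sln} gives exactly this, since $1<n$. The remaining steps are routine.
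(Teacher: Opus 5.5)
Your proof is correct and follows essentially the same route as the paper: both use that all rank-one matrices form a single $SL_n(F)$ double coset (from the proof of \cref{prop:similar-sln}), so their pairwise differences lie in $\Delta$ and generate $M_n(F)$, and the non-degeneracy of $\psi$ then forces $s=0$. Your identity $aE_{kj}=(E_{ij}+aE_{kj})-E_{ij}$ with $i\neq k$ is a special case of the paper's step, which writes a nonzero column $v$ as $v_1-v_2$ with $v_1,v_2\neq 0$.
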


\begin{proof}
    Let $V_1$ be the set of rank $1$ matrices in $M_n(F)$. By the proof of \cref{prop:similar-sln}, for $A,B \in V_1$, $A \sim_{SL_n(F)} B.$ As a result, $\lambda_{A}=\lambda_{B}.$ Let $\Delta'$ be the abelian group generated by $(A-B)$ where $A, B \in V_1.$ Then, $\Delta' \subset \Delta.$ We claim that $\Delta'=M_n(F).$ In fact, for $v \in \F^n$, we can write $v=v_1-v_2$ where $v_1, v_2 \neq 0.$ Therefore, if $X$ is a matrix with exactly one non-zero column vector, then $X \in \Delta'.$ Since every matrix is a sum of those $X$'s, we conclude that $\Delta'=M_n(F).$ Consequently, $\Delta= M_n(F)$ and hence $\Gamma(R,S)$ has no PST. 
\end{proof}

\begin{rem}
By \cref{prop:pst-matrix-gl-n} and \cref{prop:pst-matrix-sl-n}, PST cannot exist on $U$-unitary graphs with $U = GL_n(F)$ or $U=SL_n(F).$ However, 
    PST can exist if $U$ is a smaller subgroup of $GL_n(F)$. For example, let us consider the case $R=M_2(\F_2)$ and $U$ is the subgroup of permutation matrices in $R$; namely 
    \[ U  = \left\{ \begin{bmatrix}
        1 & 0 \\  0 & 1 
    \end{bmatrix}, \begin{bmatrix}
        0 & 1 \\  1 & 0 
    \end{bmatrix} \right \}  \]
    Then $\Gamma(R,U)$ is a disjoint union of four copies of $C_4$ where $C_4$ is the cycle graph on $4$ vertices. It is known that there is PST on $C_4$ (see \cite[Lemma 9]{bavsic2009perfect}.) It would be interesting to classify all pairs $(M_n(F), U)$ such that there exists a $U$-unitary graph that has PST. 
\end{rem}

\section*{Acknowledgements}
We thank  J\'an Min\'a\v{c} for his interest in this project and for asking some interesting questions about the upper bound in \cref{prop:eigenvalues}.

\end{document}